\newtheorem{theorem}{Theorem}
\newtheorem{prop}{Proposition}
\newtheorem{lemma}{Lemma}
\newtheorem{rem}{Remark}
\newtheorem{cor}{Corollary}
\begin{document}

\title[Isometric embeddings of polar Grassmannians]{Isometric 
embeddings of polar Grassmannians and 
metric characterizations of their apartments}
\author{Mariusz Kwiatkowski, Mark Pankov}
\subjclass[2000]{51A50, 51E24}
\keywords{polar Grassmann graph, apartment, isometric embedding}
\address{Department of Mathematics and Computer Science, University of Warmia and Mazury,
S{\l}oneczna 54, Olsztyn, Poland}
\email{mkv@matman.uwm.edu.pl, pankov@matman.uwm.edu.pl}

\maketitle

\begin{abstract}
We describe isometric embeddings of polar Grassmann graphs 
formed by non-maximal singular subspaces.
In almost all cases, they are induced by collinearity preserving injections 
of polar spaces.
As a simple consequence of this result, 
we get a metric characterization of apartments in polar Grassmannians.
\end{abstract}

\section{Introduction}
A {\it building} is a simplicial complex together with a distinguished family of subcomplexes,
so-called {\it apartments}, satisfying some axioms \cite{Tits}, 
see also \cite{BuekenhoutCohen-book}.
All apartments are identified with a certain Coxeter system which defines the building type.
The vertex set of the building can be labeled by the nodes of the diagram of 
this Coxeter system. The labeling is unique up to a diagram automorphism. 
The set of all vertices corresponding to the same node is a {\it building Grassmannian} 
\cite{Pankov-book1,Pasini}.
The intersections of apartments with Grassmannians
are said to be apartments in these Grassmannians. 
Grassmannians have the natural adjacency relation coming from the building structure:
two distinct vertices $a,b$ are {\it adjacent} if the building contains a simplex $P$ such that 
$P\cup\{a\}$ and  $P\cup\{b\}$ are chambers, i.e. maximal simplices in the building.
In this case, we say that the {\it line} joining $a$ and $b$  is the set of all 
vertices $c$ for which $P\cup\{c\}$ is a chamber.
So, every Grassmannian can be considered as a graph as well as a point-line geometry.
 
Every building of type $\textsf{A}_{n}$, $n\ge 3$
is the flag complex of a certain $(n+1)$-dimen\-sional vector space over a division ring
and the corresponding Grassmannians are formed by subspaces of the same dimension.
Similarly, every building of type $\textsf{C}_{n}$ is the flag complex of a rank $n$ polar space
and all buildings of type $\textsf{D}_{n}$ can be obtained from polar spaces of type 
$\textsf{D}_{n}$. 
The Grassmannians of such buildings are {\it polar} and {\it half-spin Grassmannians}.
The associated graphs are said to be {\it polar} and {\it half-spin Grassmann graphs}.
Note that the description of all automorphisms of these graphs \cite[Section 4.6]{Pankov-book1}
is a generalization of classical Chow's theorems \cite{Chow}.
In this paper isometric embeddings of polar Grassmann graphs will be considered.
In what follows we denote by $\Gamma_{k}(\Pi)$
the polar Grassmann graph formed by $k$-dimensional singular subspaces of 
a polar space $\Pi$.

Let $\Pi$ and $\Pi'$ be polar spaces of rank $n$ and $n'$, respectively.
By \cite[Theorem 3]{Pankov1}, every isometric embedding of 
the dual polar graph $\Gamma_{n-1}(\Pi)$ in the dual polar graph $\Gamma_{n'-1}(\Pi')$
is induced by a collinearity preserving injection of $\Pi$ to
the quotient polar space of $\Pi'$ by a certain $(n'-n-1)$-dimensional singular subspace.
It follows from \cite[Theorem 2]{Pankov1} that apartments in 
the polar Grassmannian formed by maximal singular subspaces of $\Pi$
can be characterized as the images of isometric embeddings of 
the $n$-dimensional hypercube graph $H_{n}$ in $\Gamma_{n-1}(\Pi)$.
If $\Pi$ and $\Pi'$ are polar spaces of types $\textsf{D}_{n}$ and $\textsf{D}_{n'}$
(respectively) and $n$ is even then 
the same holds for isometric embeddings of 
the associated half-spin Grassmann graphs \cite[Theorem 4]{Pankov3}.
By \cite[Theorem 2]{Pankov3}, apartments in the half-spin Grassmannians of $\Pi$
can be characterized as the images of isometric embeddings of 
the half-cube graph $\frac{1}{2}H_{n}$ in the corresponding half-spin Grassmann graphs;
as above, we assume that $n$ is even.
Also, there is the following conjecture \cite[Section 6]{Pankov3}:
if $n$ is odd then there exist isometric embeddings of $\frac{1}{2}H_{n}$ in 
the half-spin Grassmann graphs of $\Pi$ whose images are not apartments.

In this paper similar results will be established for isometric embeddings 
of polar Grassmann graphs formed by non-maximal singular subspaces
(Theorems \ref{theorem-main1}--\ref{theorem-main3}).
Our arguments are different from the arguments given in \cite{Pankov1,Pankov3}.
In dual polar graphs and half-spin Grassmann graphs 
the distance between two vertices is completely defined by 
the dimension of the intersection of the corresponding maximal singular subspaces.
For polar Grassmann graphs formed by non-maximal singular subspaces 
the distance formula is more complicated (Subsection 2.4).

As a simple consequence of the main results, 
we get the following metric characterization of apartments in polar Grassmannians
(Corollary 1): 
if $\Gamma_{k}(n)$ denotes the restriction of the graph $\Gamma_{k}(\Pi)$ to any apartment 
then the image of every isometric embedding of $\Gamma_{k}(n)$ in 
$\Gamma_{k}(\Pi)$ is an apartment.

It must be pointed out that there is no similar characterization
for apartments in Grassmannians of vector spaces.
Let $V$ be an $n$-dimensional vector space (over a division ring).
Consider the Grassmann graph $\Gamma_{k}(V)$  
formed by $k$-dimensional subspaces of $V$.
The restriction of $\Gamma_{k}(V)$ to every apartment of 
the corresponding Grassmannian is isomorphic to the Johnson graph $J(n,k)$.
The image of every isometric embedding of $J(n,k)$ in $\Gamma_{k}(V)$
is an apartment only in the case when $n=2k$.
The images of all possible isometric embeddings of Johnson graphs in Grassmann graphs 
are described in \cite[Chapter 4]{Pankov-book2}.
Also, \cite[Chapter 3]{Pankov-book2} contains the complete description of 
isometric embeddings of Grassmann graphs.
They are defined by semilinear embeddings of special type
and are more complicated than isometric embeddings of polar Grassmann graphs.

Other characterizations of apartments in building Grassmannians 
can be found in \cite{CKS,Coop-sur,Kasikova1,Kasikova2,Kasikova3,Pankov2}.
Some of them are in terms of independent subsets of point-line geometries.
Note that building Grassmannians can be contained in other building Grassmannians
as subspaces (in the sense of point-line geometry).
Is it possible to determine all such subspaces?
This problem is closely related to characterizing of apartments  and 
solved for some special cases \cite{BC,CKS,Coop1,Coop2}.
For example, subspaces  of polar Grassmannians isomorphic to 
Grassmannians of vector spaces are described in \cite{BC}.
There is a similar description for subspaces of symplectic Grassmannians 
isomorphic to other symplectic Grassmannians \cite{Coop2}.

\section{Basic notions and constructions}

\subsection{Graphs}
We define a {\it graph} as a pair $\Gamma=(X,\sim)$,
where $X$ is a non-empty set (possibly infinite) whose elements are called {\it vertices} 
and $\sim$ is a symmetric relation on $X$ called {\it adjacency}.
We say that vertices $x,y\in X$ are {\it adjacent} if $x\sim y$.
Every pair of adjacent vertices form an {\it edge}.
We suppose that $x\not\sim x$ for every $x\in X$,
i.e. our graph does not contain loops.
A  {\it clique} is a subset of $X$,  
where any two distinct elements are adjacent vertices of $\Gamma$.
Using Zorn lemma, we show that every clique is contained in a certain maximal clique.

We will consider connected graphs only.
In such a graph we define the {\it distance} $d(x,y)$ between two vertices $x,y$ 
as the smallest number $i$ such that there is a path consisting of $i$ edges and 
connecting $x$ and $y$ \cite[Section 15.1]{DD}.
A path between $x$ and $y$ is said to be a {\it geodesic} if 
it is formed by precisely $d(x,y)$ edges. 
The graph {\it diameter} is the greatest distance between two vertices.

An {\it embedding} of a graph $\Gamma$ in a graph $\Gamma'$
is an injection of the vertex set of $\Gamma$ to the vertex set of $\Gamma'$
transferring adjacent and non-adjacent vertices of $\Gamma$ to
adjacent and non-adjacent vertices of $\Gamma'$, respectively. 
Surjective embeddings are isomorphisms.
Every embedding $f$ sends maximal cliques of $\Gamma$ to cliques of $\Gamma'$
which are not necessarily maximal, i.e. subsets of maximal cliques.
For any distinct  maximal cliques ${\mathcal X}$ and ${\mathcal Y}$ of $\Gamma$
there exist non-adjacent vertices $x\in {\mathcal X}$ and $y\in {\mathcal Y}$.
Then $f(x)$ and $f(y)$ are non-adjacent vertices of $\Gamma'$
and there is no clique containing both $f({\mathcal X})$ and $f({\mathcal Y})$.
So, every embedding transfers  distinct maximal cliques to subsets of distinct maximal cliques.

An embedding is {\it isometric} if it preserves the distance between vertices.

\subsection{Polar spaces}
A {\it partial linear space} is a pair $\Pi=(P,{\mathcal L})$, 
where $P$ is a non-empty set whose elements are called {\it points}
and ${\mathcal L}$ is a family of proper subsets of $P$ called {\it lines}.
Every line contains at least two points and every point belongs to a certain  line.
Also, for any two distinct points there is at most one line containing them.
The points are said to be {\it collinear} if such a line exists.
A {\it subspace} of $\Pi$ is a subset $S\subset P$ such that for 
any two collinear points of $S$ the line joining them is contained in $S$.
A subspace is called {\it singular} if any two distinct points of this subspace are collinear.
The empty set, one-point sets and lines are singular subspaces.
Using Zorn lemma, we establish that 
every singular subspace is contained in a maximal singular subspace.

By \cite{BuekenhoutCohen-book,Pankov-book1,Shult-book,Ueberberg}, 
a {\it polar space} is a partial linear space satisfying the following axioms:
\begin{enumerate}
\item[(P1)] every line contains at least three points,
\item[(P2)] there is no point collinear to all points,
\item[(P3)] for every point and every line
the point is collinear to one or all points of the line,
\item[(P4)] any chain of mutually distinct incident singular subspaces is finite.
\end{enumerate}
If a polar space has a singular subspace containing more than one line
then all maximal singular subspaces are projective spaces
of the same dimension $n\ge 2$ and the number $n+1$ is called the {\it rank} of this polar space.
Polar spaces of rank $2$ (all maximal singular subspaces are lines) are known as 
{\it generalized quadrangles}. 
In the case when the rank of a polar is greater than $2$,
every singular subspace is a subspace of a certain projective space 
and its dimension is well-defined.

Two polar spaces $\Pi=(P,{\mathcal L})$ and $\Pi'=(P',{\mathcal L}')$ are {\it isomorphic} 
if there is a {\it collineation} of $\Pi$ to $\Pi'$, 
i.e. a bijection $\alpha:P\to P'$ such that $\alpha({\mathcal L})={\mathcal L}'$.

All polar spaces of rank $\ge 3$ are known \cite{Tits}.
For example, there are polar spaces related to 
non-degenerate reflexive forms (alternating, symmetric and hermitian).
If such a form is trace-valued and has isotropic subspaces of dimension at least $2$
then it defines a polar space: 
the point set is formed by all $1$-dimensional isotropic subspaces,
the lines are defined by $2$-dimensional isotropic subspaces 
and other isotropic subspaces correspond to singular subspaces of dimension greater than $1$.

Consider the $(2n)$-element set $J:=\{\pm 1,\dots, \pm n\}$
and the partial linear space $\Pi_{n}$
whose point set is $J$ and whose lines are $2$-element subsets $\{i,j\}$ such that $j\ne -i$.
Then $S\subset J$ is a singular subspace of $\Pi_{n}$ 
if and only if for every $i\in S$ we have $-i\not\in S$.
A singular subspace is maximal if it consists of $n$ points.
The dimension of a singular subspace $S$ is equal to $|S|-1$ and 
maximal singular subspaces of $\Pi_{n}$ are $(n-1)$-dimensional.
The partial linear space $\Pi_{n}$ satisfies the axioms (P2)--(P4) 
and we say that every partial linear space isomorphic to $\Pi_{n}$ 
is a {\it thin polar space of rank $n$}.

Let $\Pi=(P,{\mathcal L})$ be a polar space of rank $n$.
%The collinearity relation on $\Pi$ is denoted by $\perp$.
%two distinct points $p,q\in P$ we write $p\perp q$ if these points are collinear
%a.$p\not\perp q$ means that the points are non-collinear.
For every subset $X\subset P$ the subspace of $\Pi$ spanned by $X$,
i.e. the minimal subspace containing $X$, is denoted by $\langle X \rangle$.
If any two distinct points of $X$ are collinear then this subspace is singular.
If a point is collinear to every point of $X$
then this point is collinear to all points of the subspace $\langle X\rangle$.
A subset of $P$ consisting of $2n$ distinct points $p_{1},\dots,p_{2n}$
is a {\it frame} of $\Pi$ if for every $i$ there is unique $\sigma(i)$
such that $p_{i}$ and $p_{\sigma(i)}$ are non-collinear. 
Any $k$ distinct mutually collinear points in a frame span 
a $(k-1)$-dimensional singular subspace. 
We will use the following remarkable property of frames:
for any two singular subspaces there is a frame such that 
these subspaces are spanned by subsets of the frame.
Note that a thin polar space
contains the unique frame which coincides with the set of points.

Every rank $n$ polar space satisfies one the following conditions:
\begin{enumerate}
\item[($\textsf{C}_{n}$)] every $(n-2)$-dimensional singular subspace 
is contained in at least three maximal singular subspaces,
\item[($\textsf{D}_{n}$)] every $(n-2)$-dimensional singular subspace 
is contained in precisely two maximal singular subspaces.
\end{enumerate}
We say that a polar space is of type $\textsf{C}_{n}$ or $\textsf{D}_{n}$
if the corresponding possibility is realized.
For example, if a rank $n$ polar space is defined by an alternating or hermitian form 
then it is of type $\textsf{C}_{n}$. 
A thin polar space of rank $n$ is of type $\textsf{D}_{n}$.
Other  polar spaces  of this type will be considered in Subsection 2.5.

\subsection{Polar Grassmannians}
Let $\Pi=(P, {\mathcal L})$ be a polar space or a thin polar space of rank $n$.
For every $k\in \{0,1,\dots,n-1\}$ we denote by ${\mathcal G}_{k}(\Pi)$
the polar Grassmannian consisting of $k$-dimensional singular subspaces of $\Pi$.
Note that ${\mathcal G}_{0}(\Pi)$ coincides with $P$
and ${\mathcal G}_{n-1}(\Pi)$ is formed by maximal singular subspaces.

The {\it polar Grassmann graph $\Gamma_{k}(\Pi)$}
is the graph whose vertex set is ${\mathcal G}_{k}(\Pi)$.
In the case when  $k\le n-2$, two distinct elements of ${\mathcal G}_{k}(\Pi)$
are adjacent vertices of $\Gamma_{k}(\Pi)$
if there is a $(k+1)$-dimensional singular subspace containing them.
Two distinct maximal singular subspaces are adjacent vertices of $\Gamma_{n-1}(\Pi)$
if their intersection is $(n-2)$-dimensional. 
The graph $\Gamma_{n-1}(\Pi)$ is known as the {\it dual polar graph} associated to $\Pi$.
If $\Pi$ is a thin polar space then
we write $\Gamma_{k}(n)$ instead of $\Gamma_{k}(\Pi)$.
Note that $\Gamma_{n-1}(n)$ is isomorphic to the $n$-dimensional hypercube graph $H_{n}$.

For every frame of $\Pi$ the set consisting of all $k$-dimensional 
singular subspaces spanned by subsets of the frame is called
the {\it apartment} of ${\mathcal G}_{k}(\Pi)$ associated to this frame.
The restriction of the graph $\Gamma_{k}(\Pi)$ to every apartment of  
${\mathcal G}_{k}(\Pi)$ is isomorphic to $\Gamma_{k}(n)$.
By the frame property given in the previous subsection,
for any two elements of ${\mathcal G}_{k}(\Pi)$ there is an apartment containing them.
If $\Pi$ is a thin polar space then
there is the unique apartment of ${\mathcal G}_{k}(\Pi)$ which coincides with the 
polar Grassmannian. 

For every singular subspace $S$ 
we denote by $[S\rangle_{k}$ the set of all $k$-dimensional singular subspaces
containing $S$. 
This set is non-empty only in the case when the dimension of $S$ is not greater than $k$.
Every subset of type 
$$[S\rangle_{n-1},\;\;S\in {\mathcal G}_{n-2}(\Pi)$$
is called a {\it line} of ${\mathcal G}_{n-1}(\Pi)$. 
Each maximal clique in the dual polar graph $\Gamma_{n-1}(\Pi)$ is a line. 

Now we suppose that $k\le n-2$.
Let $S$ and $U$ be a pair of incident singular subspaces such that 
$\dim S \le k\le \dim U$.
Denote by $[S,U]_{k}$ the set of all $X\in {\mathcal G}_{k}(\Pi)$
satisfying $S\subset X\subset U$.
In the case when $S=\emptyset$, we write $\langle U]_{k}$ instead of $[S,U]_{k}$.
If 
$$\dim S=k-1\;\mbox{ and }\;\dim U=k+1$$
then $[S,U]_{k}$ is called a {\it line} of ${\mathcal G}_{k}(\Pi)$.
In the case when $k=0$, we get a line of $\Pi$.

If $1\le k\le n-3$ then there are precisely the following two types of maximal cliques of 
$\Gamma_{k}(\Pi)$:
\begin{enumerate}
\item[$\bullet$] the {\it star} $[S,U]_{k}$, 
$S\in {\mathcal G}_{k-1}(\Pi)$ and $U\in {\mathcal G}_{n-1}(\Pi)$;
\item[$\bullet$] the {\it top} $\langle U]_{k}$,  
$U\in {\mathcal G}_{k+1}(\Pi)$.
\end{enumerate}
Every star of ${\mathcal G}_{n-2}(\Pi)$ is a line contained in a certain top and
all maximal cliques of $\Gamma_{n-2}(\Pi)$ are tops. 
Tops and stars of ${\mathcal G}_{0}(\Pi)=P$ are 
lines and maximal singular subspaces of $\Pi$, respectively.

In the case when $1\le k \le n-2$,
every subset of type 
$$[S\rangle_{k},\;\;S\in {\mathcal G}_{k-1}(\Pi)$$
is said to be a {\it big star}.
Every big star $[S\rangle_{k}$ 
(together with all lines of ${\mathcal G}_{k}(\Pi)$ contained in it)
is a polar space of rank $n-k$.
We denote this polar space by $\Pi_{S}$.
Every $i$-dimensional singular subspace of $\Pi_{S}$ is a subset of type 
$[S,U]_{k}$,
where $U$ is a $(k+i)$-dimensional singular subspace containing $S$.
Therefore, for every $i\in \{0,\dots,n-k-1\}$
the Grassmannian ${\mathcal G}_{i}(\Pi_{S})$ can be naturally identified with 
the set $[S\rangle_{k+i}$ and the polar Grassmann graph 
$\Gamma_{i}(\Pi_{S})$ coincides with the restriction of the graph $\Gamma_{k+i}(\Pi)$
to $[S\rangle_{k+i}$.
If ${\mathcal A}$ is an apartment of ${\mathcal G}_{k}(\Pi)$
such that $S$ is spanned by a subset of the frame associated to ${\mathcal A}$
then ${\mathcal A}\cap [S\rangle_{k}$ is a frame of $\Pi_{S}$.
Conversely, every frame of $\Pi_{S}$ can be obtained in this way.
Similarly, every apartment of ${\mathcal G}_{i}(\Pi_{S})$
is the intersection of $[S\rangle_{k+i}$ and an apartment of ${\mathcal G}_{k+i}(\Pi)$
such that $S$ is spanned by a subset of the associated frame.

\subsection{Distance in polar Grassmann graphs}
The Grassmann graph $\Gamma_{k}(\Pi)$ is connected for every $k$.
The distance between $X,Y\in {\mathcal G}_{n-1}(\Pi)$ in 
the dual polar graph $\Gamma_{n-1}(\Pi)$ is 
$$n-1-\dim(X\cap Y).$$
In particular, the diameter of $\Gamma_{n-1}(\Pi)$ is equal to $n$
(the dimension of the empty set is $-1$).

Let $X,Y\in  {\mathcal G}_{k}(\Pi)$ and $k\le n-2$.
Suppose that there is a point $p\in X\setminus Y$ collinear to all points of $Y$. 
Then there exists a point $q\in Y\setminus X$ collinear to all points of $X$.
This follows, for example, from the existence of a frame of $\Pi$ containing 
the point $p$ and
such that $X$ and $Y$ are spanned by subsets of this frame.
If $X\cap Y$ is $(k-1)$-dimensional then $X$ and $Y$ are adjacent vertices of 
$\Gamma_{k}(\Pi)$.
In the case when 
$$\dim (X\cap Y)\le k-2,$$ 
we take any $k$-dimensional singular subspace $X'$
spanned by the point $q$ and a $(k-1)$-dimensional subspace of $X$
containing $X\cap Y$ and $p$. 
Then $X$ and $X'$ are adjacent vertices of $\Gamma_{k}(\Pi)$
and 
$$\dim (X'\cap Y)=\dim (X\cap Y)+1.$$
Note that $p$ is a point of $X'\setminus Y$ collinear to all points of $Y$.
Using induction we show that 
$$d(X,Y)=k-\dim (X\cap Y).$$
Now we suppose that every point of $X\setminus Y$ is non-collinear to a certain point of $Y$.
Then every point of $Y\setminus X$ is non-collinear to a certain point of $X$. 
We take any frame whose subsets span $X$ and $Y$
and construct a $k$-dimensional singular subspace $X'$ satisfying the following conditions:
\begin{enumerate}
\item[(1)] $X$ and $X'$ are adjacent vertices of $\Gamma_{k}(\Pi)$,
\item[(2)] there is a point of $X'\setminus Y$ collinear to all points of $Y$,
\item[(3)] $X\cap Y=X'\cap Y$. 
\end{enumerate}
Then 
$$d(X',Y)=k-\dim(X'\cap Y)=k-\dim(X\cap Y).$$
We have
$$d(X,Y)=k-\dim(X\cap Y)+1,$$
since there is no vertex of $\Gamma_{k}(\Pi)$ adjacent to $X$
and intersecting $Y$ in a subspace of dimension greater than 
the dimension of $X\cap Y$.

So,  if $k\le n-2$ then the diameter of $\Gamma_{k}(\Pi)$ is equal to $k+2$
and we have the following description of the distance.

\begin{lemma}\label{lemma-dist}
Let $X,Y\in {\mathcal G}_{k}(\Pi)$ and $k\le n-2$.
If the distance between $X$ and $Y$ in $\Gamma_{k}(\Pi)$
is equal to $m$ then one of the following possibilities is realized:
\begin{enumerate}
\item[{\rm (1)}] $\dim(X\cap Y)=k-m$, 
there is a point of $X\setminus Y$ collinear to all points of $Y$
and there is a point of $Y\setminus X$ collinear to all points of $X$;
\item[{\rm (2)}] $m>1$, $\dim(X\cap Y)=k-m+1$,
every point of $X\setminus Y$ is non-collinear to a certain point of $Y$
and every point of $Y\setminus X$ is non-collinear to a certain point of $X$.
\end{enumerate}
If $m=k+2$ then only the second possibility is realized.
\end{lemma}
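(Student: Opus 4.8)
The plan is to organize the distance computation carried out just above into the two cases of the statement; the whole argument turns on a single dichotomy. We may assume $X\neq Y$, so $m\geq 1$. By axiom (P3), exactly one of the following holds for the pair $X,Y$: either some point of $X\setminus Y$ is collinear to all points of $Y$, or every point of $X\setminus Y$ is non-collinear to at least one point of $Y$. These alternatives are mutually exclusive and exhaustive, and I would show that the first produces possibility (1) and the second produces possibility (2).

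Consider first the collinear alternative, so that some $p\in X\setminus Y$ is collinear to all points of $Y$. Choosing a frame whose subsets span both $X$ and $Y$ and which contains $p$ yields a point $q\in Y\setminus X$ collinear to all points of $X$, which is the symmetry required in possibility (1). To compute the distance I would argue by induction on $k-\dim(X\cap Y)$: if this number is $1$ then $X$ and $Y$ are adjacent, and otherwise I replace $X$ by the $k$-dimensional subspace $X'$ spanned by $q$ together with a $(k-1)$-dimensional subspace of $X$ containing both $X\cap Y$ and $p$. Then $X$ and $X'$ are adjacent, $\dim(X'\cap Y)=\dim(X\cap Y)+1$, and $p$ still lies in $X'\setminus Y$ collinear to all of $Y$, so the induction applies to $X',Y$. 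This gives $d(X,Y)=k-\dim(X\cap Y)$, whence $\dim(X\cap Y)=k-m$, which is possibility (1).

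Now consider the non-collinear alternative. Working inside a common frame I would first construct an auxiliary neighbour $X'$ of $X$ meeting conditions (1)--(3) displayed above, namely $X'$ adjacent to $X$, with $X'\cap Y=X\cap Y$, and $X'$ containing a point collinear to all of $Y$. Applying the collinear case to $X',Y$ gives $d(X',Y)=k-\dim(X\cap Y)$, and since no vertex adjacent to $X$ can meet $Y$ in a subspace of larger dimension than $X\cap Y$ does, the passage from $X$ to $Y$ costs exactly one extra edge: $d(X,Y)=k-\dim(X\cap Y)+1$, so $\dim(X\cap Y)=k-m+1$, which is possibility (2). Finally, any adjacent pair lies in a common $(k+1)$-dimensional singular subspace and hence falls under the collinear alternative; thus the non-collinear alternative forces $m>1$, as required.

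For the diameter statement I would note that possibility (1) forces $\dim(X\cap Y)=k-m\geq -1$, i.e. $m\leq k+1$; hence when $m=k+2$ possibility (1) cannot occur and only possibility (2) remains, with $\dim(X\cap Y)=-1$, i.e. $X\cap Y=\emptyset$. The one step needing genuine care is the construction of the auxiliary $X'$ in the non-collinear case, together with the verification that a single adjacency step increases the dimension of the intersection with $Y$ by at most one; once these are in hand, everything else follows from the dichotomy and the frame property.
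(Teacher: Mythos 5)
Your proposal is correct and follows essentially the same route as the paper's own derivation in Subsection~2.4: the same dichotomy on whether some point of $X\setminus Y$ is collinear to all points of $Y$, the same frame-based induction producing $d(X,Y)=k-\dim(X\cap Y)$ in the first case, and the same auxiliary neighbour $X'$ with $X'\cap Y=X\cap Y$ and a point collinear to all of $Y$ in the second. (Only a cosmetic quibble: the dichotomy is exhaustive by pure logic, not by axiom (P3).)
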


\subsection{Polar spaces of type $\textsf{D}_{n}$ and half-spin Grassmannians}
It was noted above that a thin polar space of rank $n$ is of type $\textsf{D}_{n}$.
Let $V$ be a $(2n)$-dimensional vector space over a field. 
If the characteristic of this field is not equal to 2 and 
there is a non-degenerate symmetric bilinear form on $V$ 
whose maximal isotropic subspaces are $n$-dimensional 
then the associated polar space is of type $\textsf{D}_n$. 
In the case when the characteristic of the field is equal to 2, we consider a
non-defect quadratic form on $V$ such that maximal singular subspaces are $n$-dimensional.
The associated polar space (the points are $1$-dimensional singular subspaces and the lines
are defined by $2$-dimensional singular subspaces) is also of type $\textsf{D}_n$. 
It follows from Tits's description of polar spaces \cite{Tits} 
that every polar space of type $\textsf{D}_n$, $n\ge 4$ 
is isomorphic to one of the polar spaces mentioned above.

Let $\Pi=(P, {\mathcal L})$ be a polar space of type $\textsf{D}_n$ (possibly thin)
and $n\ge 4$.
Then ${\mathcal G}_{n-1}(\Pi)$ can be uniquely decomposed
in the sum of two disjoint subsets ${\mathcal G}_{+}(\Pi)$ and ${\mathcal G}_{-}(\Pi)$
such that the distance between any two elements of 
${\mathcal G}_{\delta}(\Pi)$, $\delta\in\{+, -\}$ in the dual polar graph $\Gamma_{n-1}(\Pi)$
is even and the same distance between any 
$S\in {\mathcal G}_{+}(\Pi)$ and $U\in {\mathcal G}_{-}(\Pi)$ is odd.
These subsets are known as the {\it half-spin Grassmannians}.

If $\Pi$ is defined by a non-degenerate symmetric bilinear form $\Omega$ 
then the maximal singular subspaces of $\Pi$ are identified with the
maximal isotropic subspaces of $\Omega$ and 
the half-spin Grassmannians are the orbits
of the action of the orthogonal group $\rm{O}_{+}(\Omega)$ 
on the set of all maximal isotropic subspaces. 
Every element of $\rm{O}(\Omega)\setminus \rm{O}_{+}(\Omega)$ 
induces a collineation of $\Pi$  which transfers one of the half-spin Grassmannians to 
the other. 
The same holds for the case when $\Pi$ is defined by a quadratic form.
So, collineations of $\Pi$ sending ${\mathcal G}_{+}(\Pi)$ to ${\mathcal G}_{-}(\Pi)$
always exist.

Suppose that $n=4$ and $\delta\in\{+, -\}$.
For every line $L\in {\mathcal L}$ the set $[L\rangle_{\delta}$
consisting of all elements of ${\mathcal G}_{\delta}(\Pi)$ containing $L$
is called a {\it line} of ${\mathcal G}_{\delta}(\Pi)$. 
The half-spin Grassmannian ${\mathcal G}_{\delta}(\Pi)$ together
with the family of all such lines is a polar space of type $\textsf{D}_{4}$.
We denote this polar space by $\Pi_{\delta}$.
The polar spaces $\Pi_{+}$ and $\Pi_{-}$ are isomorphic
(every collineation of $\Pi$ transferring ${\mathcal G}_{+}(\Pi)$ to ${\mathcal G}_{-}(\Pi)$
induces a collineation between these polar spaces).
The half-spin Grassmannians corresponding to $\Pi_{\delta}$ are the point set $P$
and ${\mathcal G}_{-\delta}(\Pi)$, 
where $-\delta$ is the complement of $\delta$ in the set $\{+,-\}$. 
The associated polar spaces are $\Pi$ and $\Pi_{-\delta}$.
Therefore, $\Pi$ is isomorphic to both $\Pi_{+}$ and $\Pi_{-}$.
Since there is a natural one-to-one correspondence between lines of 
the polar spaces $\Pi$ and $\Pi_{\delta}$, every collineation of $\Pi$ to $\Pi_{\delta}$
induces a bijective transformation of ${\mathcal G}_{1}(\Pi)$.
This transformation is an automorphism of the graph $\Gamma_{1}(\Pi)$.

Let $\alpha$ be a collineation of $\Pi$ to $\Pi_{-\delta}$.
It induces collineations of $\Pi_{+}$ and $\Pi_{-}$
to the polar spaces associated to the half-spin Grassmannians of $\Pi_{-\delta}$.
So, we get a collineation of $\Pi_{\delta}$ to $\Pi$ or $\Pi_{\delta}$.
Since there are collineations of $\Pi_{-\delta}$ transferring $\Pi$ to $\Pi_{\delta}$,
we can suppose that $\alpha$ induces a collineation of $\Pi_{\delta}$ to itself.
The automorphism $g$ of $\Gamma_{1}(\Pi)$ induced by $\alpha$
has the following properties:
\begin{enumerate}
\item[$\bullet$] if $U\in {\mathcal G}_{\delta}(\Pi)$ 
then $g([p,U]_{1})$ is a top for every $p\in U$,
\item[$\bullet$]  if $U\in {\mathcal G}_{-\delta}(\Pi)$
then there exists  $U'\in {\mathcal G}_{-\delta}(\Pi)$ such that 
every $g([p,U]_{1})$ is a star contained in $\langle U']_{1}$.
\end{enumerate} 
See \cite[Section 4.6]{Pankov-book1} for the details.

\section{Main results}
From this moment we suppose that 
$\Pi=(P,{\mathcal L})$ is a polar space or a thin polar space of rank $n$
and $\Pi'=(P',{\mathcal L}')$ is a polar space of rank $n'$.

Let $f:P\to P'$ be a collinearity preserving injection,
i.e. $f$ sends collinear and non-collinear points of $\Pi$
to collinear and non-collinear points of $\Pi'$, respectively.
Show that $f$ transfers every frame of $\Pi$ to a subset in a frame of $\Pi'$.

If ${\mathcal F}$ is a frame of $\Pi$ then 
for every point $p\in f({\mathcal F})$ there is a unique point of $f({\mathcal F})$
non-collinear to $p$. 
This means that $n\le n'$ and $f({\mathcal F})$ is a frame of $\Pi'$ if $n=n'$.
In the case when $n'>n$, 
we consider the set formed by all points of $\Pi'$ collinear to all points of $f({\mathcal F})$.
If $n'-n\ge 2$ then this is a polar space of rank $n'-n$ and
$f({\mathcal F})$ together with any frame of this polar space give a frame of $\Pi'$.
If $n'-n=1$ then our set consists of mutually non-collinear points and
$f({\mathcal F})$ together with any pair of such points define a frame of $\Pi'$.

Since every singular subspace $S$ of $\Pi$ 
is spanned by a subset of a certain frame of $\Pi$, 
the dimension of the singular subspace $\langle f(S) \rangle$ is equal to the dimension of $S$. 
It is clear that $f$ is an isometric embedding of $\Gamma_{0}(\Pi)$ in $\Gamma_{0}(\Pi')$
and for every $k\in \{1,\dots,n-1\}$ the mapping 
$$(f)_{k}:{\mathcal G}_{k}(\Pi)\to {\mathcal G}_{k}(\Pi')$$
$$S\to \langle f(S) \rangle$$
is an isometric embedding of $\Gamma_{k}(\Pi)$ in $\Gamma_{k}(\Pi')$.
If $n=n'$ and $\Pi$ is a thin polar space then the image of this mapping
is an apartment of ${\mathcal G}_{k}(\Pi')$.

Now we suppose that $n\le n'$ and take any 
$m$-dimensional singular subspace $S$ of $\Pi'$ such that 
$$m\le n'-n-1$$
(this subspace is empty if $n=n'$).
Then $\Pi'_{S}$ is a polar space of rank $n'-m-1\ge n$
(in the case when $S$ is empty, this polar space 
coincides with $\Pi'$).
Every collinearity preserving injection of $\Pi$ to $\Pi'_{S}$
induces an isometric embedding of $\Gamma_{k}(\Pi)$ in $\Gamma_{k}(\Pi'_{S})$.
For every $k\in \{0,1,\dots,n-1\}$
this mapping  can be considered as an isometric embedding of 
$\Gamma_{k}(\Pi)$ in $\Gamma_{k'}(\Pi')$, where $k'=m+k+1$.

In this paper we will investigate  isometric embeddings of 
the polar Grassmann graph $\Gamma_{k}(\Pi)$ 
(this graph coincides with $\Gamma_{k}(n)$ if $\Pi$ is a thin polar space) in  
the polar Grassmann graph $\Gamma_{k'}(\Pi')$.
We start from the following simple observation.

\begin{prop}\label{prop0}
If $n\ge 4$ and $f$ is an isometric embedding of $\Gamma_{0}(\Pi)$ in $\Gamma_{m}(\Pi')$
then $m\le n'-n$ and there is an $(m-1)$-dimensional singular subspace $S$ in $\Pi'$
such that the image of $f$ is contained in $[S\rangle_{m}$ and 
$f$ is a collinearity preserving injection of $\Pi$ to $\Pi'_{S}$.
\end{prop}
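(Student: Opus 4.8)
The plan is to reduce the whole statement to a single claim: that there is an $(m-1)$-dimensional singular subspace $S$ of $\Pi'$ contained in $f(p)$ for every point $p$ of $\Pi$. Once this is in hand everything else is automatic. Indeed, the image of $f$ then lies in the big star $[S\rangle_m={\mathcal G}_0(\Pi'_S)$; since $\Gamma_0(\Pi'_S)$ is the restriction of $\Gamma_m(\Pi')$ to $[S\rangle_m$ and $f$ preserves the relations ``distance $1$'' (collinearity) and ``distance $2$'' (non-collinearity), $f$ becomes a collinearity preserving injection of $\Pi$ into $\Pi'_S$; finally the frame-transfer observation preceding Proposition \ref{prop0}, applied to $\Pi'_S$ (which has rank $n'-m$), yields $n\le n'-m$, i.e. $m\le n'-n$. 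The case $m=0$ is immediate (take $S=\emptyset$, $\Pi'_S=\Pi'$, and $f$ itself is the desired injection), so I assume $m\ge 1$; the two boundary values are disposed of separately, and for $1\le m\le n'-2$ the maximal cliques of $\Gamma_m(\Pi')$ are exactly the stars and the tops.

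First I would record that $\Gamma_0(\Pi)$ is the collinearity graph of $\Pi$: by Lemma \ref{lemma-dist} (with $k=0$) two distinct points lie at distance $1$ when collinear and at distance $2$ when not, so the diameter is $2$, and the maximal cliques are precisely the maximal singular subspaces of $\Pi$. Since $f$ carries maximal cliques into maximal cliques, for every maximal singular subspace $M$ (of projective dimension $n-1\ge 3$) the set $f(M)$ lies in a star or in a top. If it lies in a star $[S_M,W]_m$, then all $f(p)$ with $p\in M$ contain the $(m-1)$-dimensional subspace $S_M$ and we are done for $M$.

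The real work is the case where $f(M)$ lies in a top $\langle U]_m$, so that the subspaces $f(p)$, $p\in M$, are hyperplanes of the $(m+1)$-dimensional $U$. Here I would show that every \emph{triple} of images already shares a common $(m-1)$-subspace. Given $a,b,c\in M$, their span has dimension $\le 2\le n-2$, hence lies in a hyperplane $H$ of $M$; by the $(\textsf{C}_{n})$/$(\textsf{D}_{n})$ axiom there is a maximal singular subspace $M_2\ne M$ with $H\subset M_2$. As $f$ sends the distinct maximal cliques $M,M_2$ into distinct maximal cliques and $f(H)$ is large ($\dim H=n-2\ge 2$), $f(M_2)$ cannot be a second top (two distinct tops meet in at most one vertex); so $f(M_2)$ lies in a star $[S_2,W_2]_m$, and every element of $f(H)$ contains $S_2$. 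Since $f(a),f(b),f(c)$ are then distinct hyperplanes of $U$ all containing the $(m-1)$-subspace $S_2$, they share exactly $S_2$. Thus any three of the hyperplanes $\{f(p):p\in M\}$ have a common codimension-two subspace; dualizing inside $U$, every three of the corresponding points of $U^{*}$ are collinear, whence all of them are collinear and all $f(p)$, $p\in M$, contain one $(m-1)$-subspace $S_M$.

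Finally I would match the centers. If $M,M'$ are maximal singular subspaces with $\dim(M\cap M')=n-2$, then for $p\in M\cap M'$ we have $f(p)\supseteq\langle S_M,S_{M'}\rangle$; were $S_M\ne S_{M'}$ this span would have dimension $\ge m$, forcing $f$ to be constant on the (at least two) points of $M\cap M'$ and contradicting injectivity. Hence $S_M=S_{M'}$, and since the dual polar graph $\Gamma_{n-1}(\Pi)$ is connected all the centers coincide with a single $S$; as every point lies in some maximal singular subspace, $S\subseteq f(p)$ for every $p$, which is the claim. I expect the top case of the third paragraph to be the main obstacle: one cannot rule a top out by a crude count, because lines and tops may be infinite, so the argument must instead \emph{extract} the hidden center through auxiliary maximal singular subspaces and the projective principle ``every triple collinear implies all collinear''. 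The hypothesis $n\ge 4$ is used exactly there, to guarantee that any three points of $M$ fit inside a hyperplane of $M$.
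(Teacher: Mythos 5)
Your proposal is correct in its overall architecture and reaches the same conclusion, but it handles the decisive step differently from the paper. Both arguments localize to a maximal singular subspace $M$, produce an $(m-1)$-dimensional centre $S_M$ contained in every $f(p)$, $p\in M$, and then glue the centres using overlapping maximal singular subspaces (you use adjacency plus connectedness of $\Gamma_{n-1}(\Pi)$; the paper joins an arbitrary point $p$ to $M$ through the single auxiliary subspace spanned by $p$ and the points of $M$ collinear to $p$ --- these are interchangeable). The divergence is in the top case. The paper kills it outright: it invokes Lemma \ref{lemma-triangle} (embeddings preserve triangles), notes that $M\cap M'$ has dimension $n-2\ge 2$ and hence contains a triangle, and observes that if one of the two maximal cliques containing $f(M)$, $f(M')$ were a top their intersection would lie in a line, forcing a triangle onto a line --- contradiction, so $f(M)$ sits in a star and $S_M$ is handed to you for free. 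You instead live inside the top and extract the centre by showing every triple $f(a),f(b),f(c)$ of hyperplanes of $U$ shares an $(m-1)$-subspace (via an auxiliary maximal singular subspace through a hyperplane of $M$ containing $\langle a,b,c\rangle$) and then running the pencil/duality argument. Your route is self-contained and does not need Lemma \ref{lemma-triangle}, at the cost of carefully reasoning inside a configuration that is in fact vacuous: your own conclusion puts $f(M)$ on the line $[S_M,U]_m$, which the triangle lemma forbids. Both uses of $n\ge4$ are the same in spirit (one needs $\dim(M\cap M')=n-2\ge 2$, respectively a hyperplane of $M$ through any three points).

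One point needs repair. Your claim that for $1\le m\le n'-2$ the maximal cliques of $\Gamma_m(\Pi')$ are exactly the stars and the tops is false at $m=n'-2$: there all maximal cliques are tops, and stars are merely lines; and for $m=n'-1$ the maximal cliques are the lines $[T\rangle_{n'-1}$. You announce that these boundary values are ``disposed of separately'' but never do so. They must be excluded, since the conclusion forces $m\le n'-n\le n'-4$. The fix is available inside your own top-case argument: if stars are not maximal cliques, then $f(M_2)$ must lie in a top (or line) distinct from the maximal clique containing $f(M)$, while $f(H)$, having more than one element, would then lie in the intersection of two distinct tops (respectively in a single line together with a triangle of $H$), which is impossible. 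Spelling this out closes the gap; as written, the step is asserted but absent.
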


\begin{rem}\label{rem0}{\rm
The diameter of $\Gamma_{0}(\Pi)$ is equal to $2$
and every embedding of this graph is isometric.
}\end{rem}

All isometric embeddings of 
the dual polar graph $\Gamma_{n-1}(\Pi)$ in the dual polar graph $\Gamma_{n'-1}(\Pi')$
are described in \cite{Pankov1}.
The existence of such embeddings implies that the diameter of $\Gamma_{n-1}(\Pi)$ 
is not greater than the diameter of $\Gamma_{n'-1}(\Pi')$, i.e. $n\le n'$.
By \cite[Theorem 2]{Pankov1}, the image of every isometric embedding of 
the $n$-dimensional hypercube graph $H_{n}=\Gamma_{n-1}(n)$ in 
the dual polar graph $\Gamma_{n'-1}(\Pi')$
is an apartment of ${\mathcal G}_{n-1}(\Pi'_{S})$,
where $S$ is an $(n'-n-1)$-dimensional singular subspace of $\Pi'$.
Using this result and \cite[Theorem 4.17]{Pankov-book1}
the author shows that 
every isometric embedding of $\Gamma_{n-1}(\Pi)$ in $\Gamma_{n'-1}(\Pi')$
is induced by a collinearity preserving injection of $\Pi$ to $\Pi'_{S}$,
as above, $S$ is an $(n'-n-1)$-dimensional singular subspace of $\Pi'$
\cite[Theorem 3]{Pankov1}.

We will consider the case when our polar Grassmann graphs both
are formed by non-maximal singular subspaces.
The first result concerns  the case when $n\ge 5$ and $1\le k \le n-4$.

\begin{theorem}\label{theorem-main1}
Suppose that $n\ge 5$.
If $f$ is an isometric embedding of $\Gamma_{k}(\Pi)$ in $\Gamma_{k'}(\Pi')$ and
$1\le k\le n-4$ then  
$$k\le k',\;\;n-k\le n'-k'$$
and there is a $(k'-k-1)$-dimensional singular subspace $S$ of $\Pi'$
such that the image of $f$ is contained in $[S\rangle_{k'}$
and $f$ is induced by a collinearity preserving injection of $\Pi$ to $\Pi'_{S}$.
\end{theorem}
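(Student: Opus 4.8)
The plan is to run Proposition~\ref{prop0} inside the big stars and then glue the local data it produces. Fix $S\in{\mathcal G}_{k-1}(\Pi)$. By the identifications of Subsection 2.3 the restriction of $\Gamma_{k}(\Pi)$ to the big star $[S\rangle_{k}$ is the collinearity graph $\Gamma_{0}(\Pi_{S})$ of the polar space $\Pi_{S}$ of rank $n-k$, and $n-k\ge 4$ because $k\le n-4$. The restriction of $f$ to $[S\rangle_{k}$ is an embedding of $\Gamma_{0}(\Pi_{S})$, hence isometric by Remark~\ref{rem0}, so Proposition~\ref{prop0} applies and yields the inequality $n-k\le n'-k'$ together with a singular subspace $T_{S}=\bigcap_{Y\in[S\rangle_{k}}f(Y)$ of dimension $k'-1$ such that $f([S\rangle_{k})\subset[T_{S}\rangle_{k'}$ and $f|_{[S\rangle_{k}}$ is a collinearity preserving injection of $\Pi_{S}$ into $\Pi'_{T_{S}}$. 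Since $n-k\ge 4$ forces $n'-k'\ge 4$, the graph $\Gamma_{k'}(\Pi')$ is formed by non-maximal subspaces and has diameter $k'+2$; isometry of $f$ then gives $k+2\le k'+2$, which is the remaining inequality $k\le k'$.

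Next I would extract a single subspace out of the family $\{T_{S}\}$. Set $T:=\bigcap_{X\in{\mathcal G}_{k}(\Pi)}f(X)$, so that $T\subset T_{S}$ for every $S$. The core computation is to show that $\dim T=k'-k-1$. That $T$ cannot be larger follows from the rank inequality: if $\dim T\ge k'-k$ then $\Pi'_{T}$ would have rank $<n$, leaving no room for a rank $n$ domain; the real work is the matching lower bound. For this I would fix $X\in{\mathcal G}_{k}(\Pi)$ and analyse the map $S\mapsto T_{S}$ sending the hyperplanes of $X$ to hyperplanes of $f(X)\cong\mathbb{P}^{k'}$, using that distinct hyperplanes $S_{1},S_{2}$ of $X$ give big stars meeting only in $X$ while the collinearity preserving maps on $[S_{1}\rangle_{k}$ and $[S_{2}\rangle_{k}$ keep the corresponding images apart. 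A dimension count, most transparent inside an apartment where $\Pi_{S}$ and its image become thin, shows that these hyperplanes of $f(X)$ meet in dimension exactly $k'-k-1$, forcing $\dim T=k'-k-1$. Consequently the image of $f$ lies in $[T\rangle_{k'}={\mathcal G}_{k}(\Pi'_{T})$, and $\Pi'_{T}$ has rank $n'-k'+k\ge n$.

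Finally I would recover the inducing map and descend on the level. After the identification $[T\rangle_{k'}={\mathcal G}_{k}(\Pi'_{T})$, $f$ is an isometric embedding of $\Gamma_{k}(\Pi)$ into $\Gamma_{k}(\Pi'_{T})$ at the \emph{same} level $k$, and each big star $[S\rangle_{k}$ is carried into the big star $[\overline{T_{S}}\rangle_{k}$, where $\overline{T_{S}}\in{\mathcal G}_{k-1}(\Pi'_{T})$ is the class of $T_{S}$ modulo $T$. The induced assignment $S\mapsto\overline{T_{S}}$ is then an isometric embedding of $\Gamma_{k-1}(\Pi)$ into $\Gamma_{k-1}(\Pi'_{T})$ of exactly the same shape, one level lower, so I would argue by downward induction on the level. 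The base case is the point level, where an isometric embedding of a collinearity graph automatically preserves distances $1$ and $2$, hence collinearity and non-collinearity, and so is already a collinearity preserving injection; this produces $g:\Pi\to\Pi'_{T}$, and one checks $f=(g)_{k}$. I expect the two hardest points to be the lower bound on $\dim T$ in the gluing step and the verification, via the two cases of Lemma~\ref{lemma-dist}, that $S\mapsto\overline{T_{S}}$ is genuinely isometric; both rely on the hypotheses $n\ge 5$ and $k\le n-4$, which guarantee that every residue $\Pi_{S}$ occurring in the induction has rank at least $4$ so that Proposition~\ref{prop0} keeps applying.
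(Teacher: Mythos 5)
Your opening move is sound and genuinely different from the paper's route: restricting $f$ to a big star $[S\rangle_{k}$ gives an embedding of $\Gamma_{0}(\Pi_{S})$ with $\Pi_{S}$ of rank $n-k\ge 4$, so Proposition \ref{prop0} really does apply and yields the subspaces $T_{S}$, the local collinearity preserving injections, and the inequality $n-k\le n'-k'$; this is also a legitimate place for the hypothesis $k\le n-4$ to enter. The paper uses $k\le n-4$ differently (a star-triangle spans a non-maximal $(k+2)$-dimensional subspace, hence lies in two distinct stars, hence $f$ sends stars to subsets of stars), and only then derives the big-star statement (Lemma \ref{lemma1-1}); your version obtains the big-star statement plus extra local information in one stroke. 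The diameter argument for $k\le k'$ agrees with Subsection 4.3.

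The gaps are both in the gluing, and they are genuine. First, the lower bound $\dim T\ge k'-k-1$ for $T=\bigcap_{X}f(X)$ is the crux, and your ``dimension count'' does not establish it: for a fixed $X$ the subspaces $T_{S}$, as $S$ runs over \emph{all} hyperplanes of $X$, are hyperplanes of $f(X)$, and there are far more than $k'+1$ of them, so nothing a priori prevents their common intersection from dropping below dimension $k'-k-1$ unless one proves they all contain a fixed $(k'-k-1)$-dimensional subspace --- which is essentially the theorem. The appeal to an apartment ``where the image becomes thin'' is circular: Proposition \ref{prop0} controls the image of a frame only inside a single big star, and the coherence of these local frames across different big stars is precisely what must be proved. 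Second, your downward induction needs $S\mapsto\overline{T_{S}}$ to be an isometric embedding of $\Gamma_{k-1}(\Pi)$ in $\Gamma_{k-1}(\Pi'_{T})$, and already the embedding property fails to follow from what you have: for non-adjacent $S,U\in{\mathcal G}_{k-1}(\Pi)$ with $\dim(S\cap U)=k-2$, every $X\in[S\rangle_{k}$ and $Y\in[U\rangle_{k}$ satisfy $d(X,Y)\le 3$, so Lemma \ref{lemma-dist} allows $\dim(f(X)\cap f(Y))=k'-2$ for all such pairs, which is compatible with $T_{S}$ and $T_{U}$ being adjacent. The paper explicitly declines to prove that $f_{k-1}$ is even an embedding; it proves only injectivity (Lemma \ref{lemma1-2}), tops-to-tops, and stars-to-stars (Lemma \ref{lemma1-3}), runs the descent $f_{k-1},\dots,f_{0}$ on that weaker information, extracts the subspace only at the bottom level (Lemma \ref{lemma1-4}), and verifies non-collinearity preservation of $f_{0}$ by a separate distance-$(k+2)$ computation. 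To complete your argument you would have to either supply these missing steps or, in effect, reconstruct the paper's descent.
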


It must be pointed out that in Theorem \ref{theorem-main1} 
there is no assumption  concerning $n'$ and $k'$.
The case when $n\ge 4$ and $k=n-3$ is different.

\begin{theorem}\label{theorem-main2}
Suppose that $n\ge 4$ and 
$f$ is an isometric embedding of $\Gamma_{n-3}(\Pi)$ in $\Gamma_{k'}(\Pi')$.
If $\Pi$ is a polar space of type {\rm$\textsf{C}_{n}$} then 
$$n-3\le k'\le n'-3$$
and there is a $(k'-n+2)$-dimensional singular subspace $S$ of $\Pi'$
such that the image of $f$ is contained in $[S\rangle_{k'}$
and $f$ is induced by a collinearity preserving injection of $\Pi$ to $\Pi'_{S}$.
In the case when $\Pi$  is a polar space of type {\rm$\textsf{D}_{n}$},
the following assertions are fulfilled:
\begin{enumerate}
\item[{\rm (1)}] If $n=4$ then $1\le k'\le n'-3$
and there is a $(k'-2)$-dimensional singular subspace $S$ of $\Pi'$
such that the image of $f$ is contained in $[S\rangle_{k'}$.
Also, there is an automorphism $g$ of $\Gamma_{1}(\Pi)$
$($possibility identity$)$ such that 
the composition $fg$ is induced by a collinearity preserving injection of $\Pi$ to $\Pi'_{S}$.
\item[{\rm (2)}]
If $n\ge 5$ and $k'=n'-3$ then 
$n\le n'$ and there is a $(n'-n-1)$-dimensional singular subspace $S$ of $\Pi'$
such that the image of $f$ is contained in $[S\rangle_{k'}$
and $f$ is induced by a collinearity preserving injection of $\Pi$ to $\Pi'_{S}$.
\end{enumerate}
\end{theorem}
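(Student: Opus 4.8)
The plan is to localize $f$ on the big stars of $\Gamma_{n-3}(\Pi)$ and then glue the resulting local data into one global subspace. Fix $R\in{\mathcal G}_{n-4}(\Pi)$. The big star $[R\rangle_{n-3}$ is the point set of the polar space $\Pi_{R}$, whose rank is $n-(n-3)=3$; a direct check with Lemma~\ref{lemma-dist} shows that the distance in $\Gamma_{n-3}(\Pi)$ between two elements of $[R\rangle_{n-3}$ equals their distance in $\Gamma_{0}(\Pi_{R})$ (it is $1$ if the two $(n-3)$-subspaces are collinear in $\Pi_{R}$ and $2$ otherwise). Hence $f$ restricts to an isometric embedding of the point graph $\Gamma_{0}(\Pi_{R})$ in $\Gamma_{k'}(\Pi')$. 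Moreover $\Pi_{R}$ inherits the type of $\Pi$: counting the maximal singular subspaces of $\Pi$ through an $(n-2)$-subspace shows that $\Pi_{R}$ is of type $\textsf{C}_{3}$ when $\Pi$ is of type $\textsf{C}_{n}$ and of type $\textsf{D}_{3}$ when $\Pi$ is of type $\textsf{D}_{n}$. This is exactly why $k=n-3$ is the boundary case: the big stars now have rank $3$, so Proposition~\ref{prop0} (which requires source rank $\ge 4$) cannot be applied to them directly.

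So the technical core is a rank-$3$ substitute for Proposition~\ref{prop0}. For $\Pi_{R}$ of type $\textsf{C}_{3}$ I would prove that every isometric embedding of $\Gamma_{0}(\Pi_{R})$ in $\Gamma_{k'}(\Pi')$ again forces the image into a single big star $[S_{R}\rangle_{k'}$ with $\dim S_{R}=k'-1$ and is a collinearity preserving injection of $\Pi_{R}$ in $\Pi'_{S_{R}}$; the argument tracks images of the maximal singular subspaces (projective planes) of $\Pi_{R}$, which are cliques and therefore land in stars or tops of $\Gamma_{k'}(\Pi')$, and uses that in type $\textsf{C}_{3}$ the two families of planes are not interchangeable. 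For $\Pi_{R}$ of type $\textsf{D}_{3}$ this rigidity fails: the triality of $\Pi_{R}$ supplies additional embeddings in which the role of points and of a half-spin family are exchanged, and these are precisely the local shadows of the automorphism $g$ appearing in part (1) of the theorem. Comparing $\dim S_{R}=k'-1$ with the rank requirement $\operatorname{rank}\Pi'_{S_{R}}=n'-k'\ge 3$ already yields $k'\le n'-3$.

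The next step is to glue. As $R$ ranges over ${\mathcal G}_{n-4}(\Pi)$, the local apices $S_{R}$ (each of dimension $k'-1$) must be mutually compatible: whenever two big stars $[R_{1}\rangle_{n-3}$ and $[R_{2}\rangle_{n-3}$ share a vertex, the images force $S_{R_{1}}$ and $S_{R_{2}}$ to meet in a subspace of codimension one in each, and pursuing this along chains of overlapping big stars pins down a common subspace $S$ of dimension $k'-n+2$ contained in all the $S_{R}$, with the whole image of $f$ contained in $[S\rangle_{k'}$. This simultaneously gives the lower bound $k'\ge n-3$ (so that $\dim S\ge -1$) and reduces $f$ to an isometric embedding of $\Gamma_{n-3}(\Pi)$ in $\Gamma_{n-3}(\Pi'_{S})$ between graphs of the same Grassmann index. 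In the type $\textsf{C}_{n}$ case the compatible local collinearity preserving injections assemble into a single collinearity preserving injection of $\Pi$ in $\Pi'_{S}$ by the Chow-type recognition of the star/top structure (cf.\ \cite{Pankov-book1}), which finishes that case.

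Finally, the type $\textsf{D}_{n}$ cases must absorb the triality ambiguity left over from the rank-$3$ step. When $n=4$ (so $k=1$) the local exchanges are globally coherent and are realized by an automorphism $g$ of $\Gamma_{1}(\Pi)$ of the kind described in Subsection~2.5 (induced by a collineation of $\Pi$ onto some $\Pi_{\delta}$); after replacing $f$ by $fg$ the gluing produces a genuine collinearity preserving injection, which is the content of part (1). When $n\ge 5$ and $k'=n'-3$, the subspace $S$ is forced to be $(n'-n-1)$-dimensional and $\Pi'_{S}$ to have rank exactly $n$; I would show that this extremal position rigidifies the top cliques enough to rule out the triality alternative, so that $f$ is induced directly, giving part (2). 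I expect the rank-$3$ type $\textsf{D}_{3}$ classification, together with checking that its local exchanges glue to a single graph automorphism $g$, to be the main obstacle; the gluing of the apices $S_{R}$ and the extremal rigidity argument for $k'=n'-3$ should be more routine once the local picture is in place.
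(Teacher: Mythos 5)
Your reduction to the rank-$3$ big stars $[R\rangle_{n-3}$ is a genuinely different decomposition from the paper's (which works with regular pairs of triangles and with ``special'' maximal singular subspaces inside rank-$4$ residues $\Pi_{T}$, $\dim T=n-5$), and the set-up is sound as far as it goes: the restriction of $f$ to $[R\rangle_{n-3}$ is indeed an embedding of $\Gamma_{0}(\Pi_{R})$, planes of $\Pi_{R}$ are exactly the stars of ${\mathcal G}_{n-3}(\Pi)$ with centre $R$, and $\Pi_{R}$ inherits the type of $\Pi$. The problem is that your ``rank-$3$ substitute for Proposition~\ref{prop0}'' is precisely where the whole theorem lives, and you assert it rather than prove it. Worse, there is a structural reason to doubt it can be proved \emph{locally}, i.e.\ inside a single $\Pi_{R}$: the only constraint visible there is that the image of a line of $\Pi_{R}$ (which is \emph{not} a triangle of $\Gamma_{n-3}(\Pi)$, so Lemma~\ref{lemma-triangle} says nothing about it) lies in the intersection of the two maximal cliques carrying two planes through that line, and this forces at most one top-plane per line --- a condition that is combinatorially satisfiable in type $\textsf{C}_{3}$ as well. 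The paper's actual mechanism for separating $\textsf{C}$ from $\textsf{D}$ (Lemmas~\ref{lemma2-2}--\ref{lemma2-4} and the parity contradiction on $\Gamma_{3}(\Pi)$ using a third point of a line of the dual polar space) is a propagation argument across \emph{several} maximal singular subspaces of a rank-$4$ space, built on regular pairs of triangles that span $(k+2)$-dimensional subspaces; none of these configurations is confined to one big star $[R\rangle_{n-3}$. Your localization also discards the tops of ${\mathcal G}_{n-3}(\Pi)$, whose interaction with stars (regular pairs of type (2)) is the engine of the paper's proof. Finally, the flexibility you invoke is misattributed: $\textsf{D}_{3}$ has no triality (that is a $\textsf{D}_{4}$ phenomenon), and the automorphism $g$ in part (1) comes from the triality of the rank-$4$ space $\Pi$, not from a symmetry of a rank-$3$ residue.

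Two further points. First, your gluing step presupposes that in the flexible $\textsf{D}$-case each restriction still lands in a single big star $[S_{R}\rangle_{k'}$ and that the apices $S_{R}$ intersect pairwise in codimension one; the first claim needs proof (it is part of the conclusion, not a given), and the descent from ``pairwise codimension one'' to a common $(k'-n+2)$-dimensional core is essentially the content of Proposition~\ref{prop-tech}, which you would do better to invoke outright once stars-to-stars is established. Second, the case $n\ge 5$, $k'=n'-3$, type $\textsf{D}_{n}$ --- which you dismiss as an ``extremal rigidity argument'' to be supplied --- is where the paper performs its longest computation (the case analysis establishing the inequality \eqref{eq2-2} via Lemma~\ref{lemma-dist}); nothing in your outline indicates how you would obtain it. As it stands, the proposal is an organisational frame around two unproved claims that together constitute the theorem.
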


Theorem \ref{theorem-main2} does not contain  any assumption concerning $n'$ and $k'$
except the case when $\Pi$ is a polar space of type $\textsf{D}_{n}$, $n\ge 5$.
In this special case, we can describe isometric embeddings 
of $\Gamma_{n-3}(\Pi)$ in $\Gamma_{n'-3}(\Pi')$ only.

Our third result covers the case when $n=n'$ and $k=k'=n-2$.

\begin{theorem}\label{theorem-main3}
If $n=n'$ then every isometric embedding of $\Gamma_{n-2}(\Pi)$ in $\Gamma_{n-2}(\Pi')$
is induced by a collinearity preserving injection of $\Pi$ to $\Pi'$.
\end{theorem}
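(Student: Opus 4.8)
The plan is to recover a collinearity preserving injection $g\colon\Pi\to\Pi'$ by reading off the action of $f$ on maximal cliques. Since $k=n-2$, every maximal clique of $\Gamma_{n-2}(\Pi)$ is a top $\langle U]_{n-2}$ with $U\in{\mathcal G}_{n-1}(\Pi)$, and the same holds in $\Gamma_{n-2}(\Pi')$ because $n'=n$. As $f$ sends distinct maximal cliques to subsets of distinct maximal cliques, while each top has at least three vertices and two distinct tops meet in at most one vertex, the image $f(\langle U]_{n-2})$ lies in a unique top. This yields a well-defined injection
$$F\colon{\mathcal G}_{n-1}(\Pi)\to{\mathcal G}_{n-1}(\Pi'),\qquad f(\langle U]_{n-2})\subset\langle F(U)]_{n-2}.$$
My goal is to prove that $F$ is an isometric embedding of the dual polar graph $\Gamma_{n-1}(\Pi)$ in $\Gamma_{n-1}(\Pi')$; then \cite[Theorem 3]{Pankov1}, applied with $n=n'$ so that the auxiliary $(n'-n-1)=(-1)$-dimensional subspace is empty and the quotient is $\Pi'$ itself, furnishes $g$ with $\langle g(U)\rangle=F(U)$ for every $U$.

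To analyze $F$ metrically I would control the realizable distances between two tops. Write $\delta(U_1,U_2)=(n-1)-\dim(U_1\cap U_2)$ for the dual polar distance. Using that $X\cap Y\subset U_1\cap U_2$ for $X\subset U_1$, $Y\subset U_2$, together with Lemma \ref{lemma-dist} and the frame property, a direct computation shows that for distinct tops the smallest value of $d(X,Y)$ over $X\in\langle U_1]_{n-2}$, $Y\in\langle U_2]_{n-2}$ equals $\delta(U_1,U_2)-1$. Since $f$ is isometric and $f(\langle U_i]_{n-2})\subset\langle F(U_i)]_{n-2}$, the minimum over the images is at least the minimum over the full tops in $\Pi'$, which gives
$$\delta(U_1,U_2)-1=\min_{X,Y}d(f(X),f(Y))\ge\delta(F(U_1),F(U_2))-1.$$
Thus $F$ does not increase dual polar distance; in particular adjacent maximal subspaces go to adjacent ones. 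A complementary computation of the largest realizable distance between two tops, combined with this minimum, pins down $\dim(U_1\cap U_2)$ exactly whenever $\dim(U_1\cap U_2)\ge 1$.

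The main obstacle is the remaining case $\dim(U_1\cap U_2)\in\{-1,0\}$, i.e. the two largest dual polar distances. Here the maximal realizable distance already saturates at the diameter $n$, so it cannot separate $U_1\cap U_2=\emptyset$ from $\dim(U_1\cap U_2)=0$; and because $f$ need not be surjective on tops (indeed $\Pi$ may embed into a $\Pi'$ of larger order), the image $f(\langle U_1]_{n-2})$ can avoid every vertex through a prospective common point of $F(U_1)$ and $F(U_2)$, so incidences among tops alone do not detect whether two disjoint maximal subspaces are sent to maximal subspaces meeting in a point. To resolve this I would descend to the finer line structure: each big star $[S\rangle_{n-2}$, $S\in{\mathcal G}_{n-3}(\Pi)$, is a generalized quadrangle $\Pi_S$, and the restriction of $\Gamma_{n-2}(\Pi)$ to it is the (isometrically embedded, diameter-two) collinearity graph of $\Pi_S$, whose lines are the stars $[S,U]_{n-2}$ that $f$ carries into the tops $\langle F(U)]_{n-2}$. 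Tracking the image of such a generalized quadrangle should reconstruct the action of $f$ on $(n-3)$-dimensional subspaces, hence the point-level collinearity, upgrading $F$ to a genuine isometric embedding of the dual polar graph. I expect this generalized-quadrangle analysis to be the hard part.

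Once $F$ is an isometric embedding of dual polar graphs, \cite[Theorem 3]{Pankov1} produces a collinearity preserving injection $g\colon\Pi\to\Pi'$ with $\langle g(U)\rangle=F(U)$ for all $U\in{\mathcal G}_{n-1}(\Pi)$, and it remains to identify $f$ with $(g)_{n-2}$. For a vertex $X\in{\mathcal G}_{n-2}(\Pi)$ choose two distinct maximal singular subspaces $U_1,U_2\supset X$; then $U_1\cap U_2=X$, so $f(X)\subset F(U_1)\cap F(U_2)=\langle g(U_1)\rangle\cap\langle g(U_2)\rangle$. This intersection contains $\langle g(X)\rangle$, and since $F$ is injective its dimension is at most $n-2=\dim\langle g(X)\rangle$; hence it equals $\langle g(X)\rangle$. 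As $f(X)$ is $(n-2)$-dimensional and contained in it, $f(X)=\langle g(X)\rangle=(g)_{n-2}(X)$, so $f$ is induced by $g$, as claimed.
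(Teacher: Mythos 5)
Your overall strategy is the same as the paper's: pass to the induced injection $F$ on ${\mathcal G}_{n-1}(\Pi)$, prove that it is an isometric embedding of dual polar graphs, invoke \cite[Theorem 3]{Pankov1} (with $S=\emptyset$), and recover $f(X)$ as the intersection of the images of two maximal singular subspaces through $X$. Your min/max analysis of realizable distances between two tops is a legitimate variant of the paper's adjacency argument, and in fact it does slightly more than you claim: the minimum forces $\dim(F(U_1)\cap F(U_2))\ge\dim(U_1\cap U_2)$ and the maximum forces the reverse inequality whenever $\dim(F(U_1)\cap F(U_2))\ge 0$, which already settles the case $\dim(U_1\cap U_2)=0$. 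The concluding identification $f(X)=\langle g(X)\rangle$ via $F(U_1)\cap F(U_2)$ is correct. So the only genuinely open case is $U_1\cap U_2=\emptyset$ with $F(U_1)$ and $F(U_2)$ possibly meeting in a point, i.e.\ preservation of the maximal dual polar distance $n$.

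That is exactly the case you do not prove, and it is the crux of the theorem. The route you sketch for it is doubtful as well as unexecuted: in $\Gamma_{n-2}(\Pi')$ all maximal cliques are tops, so $f$ carries the star (= line) $[S,U]_{n-2}$ of the generalized quadrangle $\Pi_S$ into the top $\langle F(U)]_{n-2}$, and nothing forces its image to lie in a star of ${\mathcal G}_{n-2}(\Pi')$; hence there is no induced map on ${\mathcal G}_{n-3}(\Pi)$ of the kind that drives the proof of Proposition \ref{prop-tech}, and "reconstructing the action of $f$ on $(n-3)$-dimensional subspaces" has no evident meaning here. The paper closes this case by a different, concrete argument: for $d(X,Y)=n$ it chooses a frame with $X=\langle A,B\rangle$ and $Y=\langle C,D\rangle$, where $A,B,C,D\in{\mathcal G}_{n-2}(\Pi)$ satisfy $d(A,C)=d(B,D)=n$ and $d(A,D)=d(B,C)=n-1$; these distances are preserved by $f$, a hypothetical point $p\in F(X)\cap F(Y)$ is shown (Lemma \ref{lemma3-1}) to lie outside $f(A)\cup f(B)\cup f(C)\cup f(D)$, so $F(X)=\langle f(A),p\rangle$, and then a point $q\in f(D)$ collinear to all points of $f(A)$ (which exists because $d(f(A),f(D))=n-1$) is collinear to all of $F(X)$, hence lies in $F(X)\cap F(Y)$, contradicting Lemma \ref{lemma3-1}. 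Without this (or an equivalent) argument your proof is incomplete.
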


As a direct consequence of the above results, 
we get the following characterization of apartments in polar Grassmannians.

\begin{cor}
The image of every isometric embedding of 
$\Gamma_{k}(n)$ in $\Gamma_{k}(\Pi)$ 
is an apartment of ${\mathcal G}_{k}(\Pi)$.
\end{cor}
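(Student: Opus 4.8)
The plan is to read the Corollary as the specialization of Theorems \ref{theorem-main1}--\ref{theorem-main3} to the situation where the source is thin and neither the rank nor the index changes. Recall that $\Gamma_{k}(n)$ is the polar Grassmann graph $\Gamma_{k}(\Pi_{n})$ of the thin polar space $\Pi_{n}$ of rank $n$, and that $\Pi_{n}$ is of type $\textsf{D}_{n}$. So an isometric embedding $f$ of $\Gamma_{k}(n)$ in $\Gamma_{k}(\Pi)$ is an isometric embedding of the kind treated in the theorems, with source $\Pi_{n}$ and target $\Pi$, and with $n'=n$, $k'=k$ in the notation used there. First I would record that under this specialization every applicable theorem produces a singular subspace $S$ of the target of dimension $k'-k-1=-1$; that is, $S=\emptyset$, so the relevant residual polar space is the whole of $\Pi$, and $f$ (up to the modification discussed below) is induced by a collinearity preserving injection $h$ of $\Pi_{n}$ to $\Pi$.

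Next I would split according to $k$ and invoke the matching result. For $1\le k\le n-4$ (so $n\ge 5$) this is Theorem \ref{theorem-main1}: the inequalities $k\le k'$ and $n-k\le n'-k'$ become equalities and force $S=\emptyset$. For $k=n-3$ (so $n\ge 4$) I would use Theorem \ref{theorem-main2} in its $\textsf{D}_{n}$ form, since $\Pi_{n}$ is thin: when $n\ge 5$ the hypothesis $k'=n'-3$ holds because $k=k'=n-3=n'-3$, and when $n=4$ the bound $1\le k'\le n'-3$ reads $1\le 1\le 1$. For $k=n-2$ this is Theorem \ref{theorem-main3} with $n=n'$. The two boundary indices lie outside Theorems \ref{theorem-main1}--\ref{theorem-main3}: for $k=0$ an isometric embedding is simply a collinearity preserving injection (the graphs $\Gamma_{0}$ have collinearity as adjacency, and by Remark \ref{rem0} every embedding is automatically isometric); for $k=n-1$ one appeals to the description of isometric embeddings of dual polar graphs, where with $n'=n$ the accompanying subspace is again empty and the image is an apartment \cite{Pankov1}.

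To finish I would apply the observation recorded at the beginning of Section 3: if $n=n'$ and the source is a thin polar space, then the map $(h)_{k}$ induced by a collinearity preserving injection $h$ has an apartment of $\mathcal{G}_{k}(\Pi)$ as its image. Since in each of the cases above $f=(h)_{k}$, the image of $f$ is an apartment. The one case needing an extra line is the $\textsf{D}_{4}$ situation ($n=4$, $k=1$) of Theorem \ref{theorem-main2}(1): there the theorem only yields an automorphism $g$ of $\Gamma_{1}(\Pi_{n})$ with $fg=(h)_{1}$. I would observe that $g$ is a bijection of the vertex set $\mathcal{G}_{1}(\Pi_{n})$ onto itself, so that $\mathrm{im}(f)=f(g(\mathcal{G}_{1}(\Pi_{n})))=\mathrm{im}(fg)$, and the latter is an apartment by the same observation.

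The argument is genuinely light, so the ``hard part'' is only bookkeeping: checking that with $n'=n$ and $k'=k$ each theorem's dimension count collapses $S$ to the empty set, and that the five ranges of $k$ together exhaust $\{0,1,\dots,n-1\}$ for every $n$ (including the small ranks where some of the theorems are vacuous). The single genuine subtlety is the $\textsf{D}_{4}$ twist, where the image must be seen to be unchanged under precomposition with the graph automorphism $g$; everything else is a direct appeal to the already established structure of the embeddings.
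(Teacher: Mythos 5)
Your proposal is correct and follows exactly the route of the paper's own (very terse) proof: the paper likewise handles $k=0$ via the frame definition, $k=n-1$ via \cite[Theorem 2]{Pankov1}, and $1\le k\le n-2$ by specializing Theorems \ref{theorem-main1}--\ref{theorem-main3} with $n'=n$, $k'=k$ (so $S=\emptyset$) and invoking the Section 3 observation that a collinearity preserving injection of a thin polar space of equal rank induces maps whose images are apartments. Your extra remark that precomposition with the automorphism $g$ in the $\textsf{D}_4$ case does not change the image is a detail the paper leaves implicit, correctly supplied.
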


\begin{proof}
For $k=0$ the statement follows directly from the frame definition. 
The case $k=n-1$ was considered in \cite[Theorem 2]{Pankov1}.
If $1\le k\le n-2$ then we apply Theorems \ref{theorem-main1}--\ref{theorem-main3} 
to isometric embeddings of $\Gamma_{k}(n)$ in $\Gamma_{k}(\Pi)$.
\end{proof}

\section{Proof of Proposition \ref{prop0} and Theorem \ref{theorem-main1}}

\subsection{Triangles}
We say that three distinct mutually adjacent vertices of $\Gamma_{k}(\Pi)$
form a {\it triangle} if they do not belong to a common line of ${\mathcal G}_{k}(\Pi)$.
The existence of triangles implies that $k\le n-2$.
If $1\le k\le n-3$ then there are the following two types of triangles:
{\it star-triangles} contained in stars and {\it top-triangles} contained in tops
\cite[Lemma 4.10]{Pankov-book1}.
Note that ${\mathcal G}_{n-2}(\Pi)$ contains only top-triangles.
If $S_{1},S_{2},S_{3}\in {\mathcal G}_{k}(\Pi)$ form a star-triangle then
$$\dim (S_{1}\cap S_{2}\cap S_{3})=k-1\;\mbox{ and }\;
\dim\langle S_{1},S_{2},S_{3}\rangle=k+2.$$
In the case when $S_{1},S_{2},S_{3}\in {\mathcal G}_{k}(\Pi)$
form a top-triangle, we have
$$\dim (S_{1}\cap S_{2}\cap S_{3})=k-2\;\mbox{ and }\;
\dim\langle S_{1},S_{2},S_{3}\rangle=k+1.$$
Each triangle of $\Pi$ is a star-triangle.

\begin{lemma}\label{lemma-triangle}
If $n\ge 3$ and $k\le n-3$ then every embedding of $\Gamma_{k}(\Pi)$ in $\Gamma_{k'}(\Pi')$ 
transfers triangles to triangles.
\end{lemma}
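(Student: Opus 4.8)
The goal is to prove Lemma~\ref{lemma-triangle}: under the hypotheses $n\ge 3$, $k\le n-3$, every embedding $f:\Gamma_{k}(\Pi)\to\Gamma_{k'}(\Pi')$ sends triangles to triangles. A triangle is a set of three mutually adjacent vertices not lying on a common line. Since $f$ is an embedding, it preserves adjacency and non-adjacency, so the images of three mutually adjacent vertices are again mutually adjacent. The only thing that could fail is that the three image vertices, while pairwise adjacent, happen to lie on a common line of ${\mathcal G}_{k'}(\Pi')$ — in which case they would not form a triangle. So the plan is to rule out this degeneration.

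The key idea is to find a purely \emph{combinatorial} (graph-theoretic) characterization of triangles that is automatically preserved by any embedding. A natural candidate is to count common neighbours. Three mutually adjacent vertices lie on a common line exactly when that line forms a clique, and in a polar Grassmann graph distinct points of a line behave very differently (with respect to the ambient maximal cliques — stars and tops) than three points in general position. Concretely, I would exploit the maximal-clique structure described in Subsection~2.3: for $1\le k\le n-3$ the maximal cliques are precisely the stars $[S,U]_k$ and the tops $\langle U]_k$, and a line lies in many maximal cliques whereas a genuine triangle (star- or top-triangle) sits inside a unique maximal clique of its type. The cleanest invariant is therefore: three mutually adjacent vertices form a triangle if and only if every vertex adjacent to all three of them, together with the three, still lies in a single maximal clique — or, dually, a collinear triple is distinguished by the existence of a fourth vertex completing them to a line's worth of common neighbours. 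I would formalize one such condition and check, using the explicit descriptions of stars, tops, and lines, that it is expressible entirely in terms of adjacency.

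The concrete steps I would carry out are: first, recall from \cite[Lemma 4.10]{Pankov-book1} that in the source graph every triangle is either a star-triangle or a top-triangle, with the intersection/span dimensions recorded above. Second, isolate a graph-invariant property $P$ distinguishing ``three mutually adjacent vertices on a common line'' from ``three mutually adjacent vertices forming a triangle'' — the most robust choice being a statement about the set of common neighbours of the three vertices and how those neighbours cluster into cliques. Third, verify that a collinear triple in \emph{any} polar Grassmann graph (source or target) satisfies $P$ while a triangle fails $P$, using the line and maximal-clique descriptions. Fourth, since $f$ preserves adjacency, non-adjacency, and hence the invariant $P$, conclude: if $f$ sent some triangle to a collinear triple, the triple would satisfy $P$, so its $f$-preimage would satisfy $P$, contradicting that the preimage is a triangle.

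The main obstacle is making the distinguishing property genuinely \emph{intrinsic} to the abstract graph, so that it transfers across an embedding into a possibly different polar space $\Pi'$ of different rank $n'$ and with a different index $k'$. The subtlety is that $f$ need not be surjective and need not respect the star/top dichotomy, so I cannot simply say ``$f$ maps lines to lines.'' I expect the cleanest route is to characterize a collinear triple by the behaviour of its common neighbourhood: three points on a line of ${\mathcal G}_{k}(\Pi)$ have a large, ``line-like'' common neighbourhood that is closed under a further adjacency condition, whereas a star- or top-triangle does not. Checking this common-neighbour criterion is a finite case analysis over the two triangle types and the line, and it is exactly the kind of verification the excerpt's Subsection~2.3 material is set up to support; once the criterion is seen to be adjacency-definable, preservation under $f$ is immediate and the contradiction closes the proof.
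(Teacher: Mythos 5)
Your high-level plan --- find an adjacency-definable property separating collinear triples from triangles and transfer it through $f$ --- is the right shape, and it is in fact what the paper does. But the proposal stops exactly where the content of the lemma lies, and the two candidate invariants you float would not work. The invariant ``every vertex adjacent to all three of them, together with the three, still lies in a single maximal clique'' is vacuous: any four pairwise adjacent vertices form a clique and hence lie in some maximal clique by Zorn's lemma, so this holds for triangles and collinear triples alike. The dual candidate, ``existence of a fourth vertex completing them to a line,'' fails because lines of ${\mathcal G}_{k'}(\Pi')$ may have exactly three points, and worse, it is an \emph{existential} property of the target configuration, so you cannot pull it back through a non-surjective $f$ (the fourth point need not lie in the image). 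You also need to be careful with quantifiers throughout: an embedding transfers existential adjacency statements \emph{forward} (the witness maps over) and universal ones \emph{backward} (restrict to the image), and your final contradiction silently assumes the property pulls back, which forces it to be universally quantified.

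The invariant that actually works is: \emph{there exists a vertex $Y$ adjacent to $S_{1}$ and $S_{2}$ but not to $S_{3}$.} A collinear triple in any polar Grassmann graph can never admit such a $Y$, because a vertex adjacent to two distinct points of a line is adjacent to all points of that line (the one-or-all property of these geometries); this is the universal statement satisfied by collinear triples. For a triangle one must \emph{construct} the witness $Y$, and this is where the hypothesis $k\le n-3$ enters: taking a maximal clique ${\mathcal X}$ containing the triangle, one finds a second maximal clique ${\mathcal Y}$ meeting ${\mathcal X}$ precisely in the line through $S_{1},S_{2}$ (for $1\le k\le n-3$ this uses the star/top description of maximal cliques; for $k=0$ it uses that every singular subspace is the intersection of two maximal singular subspaces). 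Since $S_{3}\notin{\mathcal Y}$ and ${\mathcal Y}$ is a maximal clique, some $Y\in{\mathcal Y}$ is non-adjacent to $S_{3}$, while $Y$ is adjacent to $S_{1},S_{2}$. Then $f(Y)$ is adjacent to $f(S_{1}),f(S_{2})$ and not to $f(S_{3})$, so $f(S_{3})$ cannot lie on the line through $f(S_{1}),f(S_{2})$. Without this explicit construction --- which is the whole proof --- the argument is not complete.
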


\begin{proof}
Let $f$ be an embedding of $\Gamma_{k}(\Pi)$ in $\Gamma_{k'}(\Pi')$ 
and $k\le n-3$. 
Suppose that $S_{1},S_{2},S_{3}\in {\mathcal G}_{k}(\Pi)$ form a triangle
and ${\mathcal X}$ is a maximal clique of $\Gamma_{k}(\Pi)$
containing this triangle.
There exists a maximal clique ${\mathcal Y}$ of $\Gamma_{k}(\Pi)$
intersecting ${\mathcal X}$ precisely in  the line joining $S_{1}$ and $S_{2}$.
If $1\le k\le n-3$ then maximal cliques of $\Gamma_{k}(\Pi)$ are stars and tops and
the latter statement is obvious.
Maximal cliques of $\Gamma_{0}(\Pi)$ are maximal singular subspaces of $\Pi$
and it is well-known that for any singular subspace $S$
there exist maximal singular subspaces $M$ and $M'$
such that $S=M\cap M'$.

So, $S_{3}\not\in {\mathcal Y}$ and 
${\mathcal Y}$ contains a vertex $Y$ non-adjacent to $S_{3}$.
Since $f(Y)$ is adjacent to $f(S_{1})$ and $f(S_{2})$,
it is adjacent to all vertices of $\Gamma_{k'}(\Pi')$ belonging to the line
joining $f(S_{1})$ and $f(S_{2})$.
On the other hand, $f(Y)$ and $f(S_{3})$ are not adjacent.
Therefore, $f(S_{3})$ is not on this line.
\end{proof}

\begin{lemma}
If $n\ge 3$ and $k\le n-3$ then
for any embedding of $\Gamma_{k}(\Pi)$ in $\Gamma_{k'}(\Pi')$ 
the image of every maximal clique of $\Gamma_{k}(\Pi)$
cannot be contained in two maximal cliques of different types.
\end{lemma}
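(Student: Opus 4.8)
The plan is to reduce the statement to the behaviour of triangles, using Lemma \ref{lemma-triangle} together with the dichotomy between star- and top-triangles recorded above. Recall that in $\Gamma_{k'}(\Pi')$ (when both types of maximal cliques occur, i.e. $1\le k'\le n'-3$, which is the only case in which there is anything to prove) every triangle $Y_{1},Y_{2},Y_{3}$ is either a star-triangle, with $\dim(Y_{1}\cap Y_{2}\cap Y_{3})=k'-1$ and $\dim\langle Y_{1},Y_{2},Y_{3}\rangle=k'+2$, or a top-triangle, with $\dim(Y_{1}\cap Y_{2}\cap Y_{3})=k'-2$ and $\dim\langle Y_{1},Y_{2},Y_{3}\rangle=k'+1$. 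These two invariants will let me read off the type of a triangle from the maximal clique containing it.

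First I would check that every maximal clique $\mathcal{X}$ of $\Gamma_{k}(\Pi)$ contains a triangle. If $1\le k\le n-3$, then a star $[S,U]_{k}$ contains a triangle because the $k$-dimensional members between $S$ and $U$ form a projective space of dimension $n-1-k\ge 2$, and a top $\langle U]_{k}$ contains a triangle because the $k$-dimensional subspaces of $U\in\mathcal{G}_{k+1}(\Pi)$ form a projective space of dimension $k+1\ge 2$; in either space three points in general position give the required triangle. If $k=0$, then $\mathcal{X}$ is a maximal singular subspace of $\Pi$, a projective space of dimension $n-1\ge 2$, which again contains three non-collinear points. Fix such a triangle $T\subset\mathcal{X}$. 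By Lemma \ref{lemma-triangle}, $f(T)$ is a triangle of $\Gamma_{k'}(\Pi')$, and of course $f(T)\subset f(\mathcal{X})$.

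The heart of the proof is the following two observations. If a triangle $Y_{1},Y_{2},Y_{3}$ is contained in a star $[S',U']_{k'}$ with $S'\in\mathcal{G}_{k'-1}(\Pi')$, then $S'\subset Y_{1}\cap Y_{2}\cap Y_{3}$, so $\dim(Y_{1}\cap Y_{2}\cap Y_{3})\ge k'-1$, and by the dichotomy it must be a star-triangle. Dually, if the triangle is contained in a top $\langle W']_{k'}$ with $W'\in\mathcal{G}_{k'+1}(\Pi')$, then $\langle Y_{1},Y_{2},Y_{3}\rangle\subset W'$, so $\dim\langle Y_{1},Y_{2},Y_{3}\rangle\le k'+1$, forcing a top-triangle. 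Now suppose, for contradiction, that $f(\mathcal{X})$ is contained both in a star and in a top of $\Gamma_{k'}(\Pi')$. Then the triangle $f(T)$ would be contained in both, hence simultaneously a star-triangle and a top-triangle; but then the dimension of its triple intersection would be both $k'-1$ and $k'-2$, which is absurd. Therefore $f(\mathcal{X})$ cannot be contained in maximal cliques of two different types.

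The only delicate points, which I regard as the main (though minor) obstacle, are the boundary verifications needed for the existence of triangles: for $1\le k\le n-3$ one uses $n-1-k\ge 2$ for stars and $k+1\ge 2$ for tops, while for $k=0$ one uses $n-1\ge 2$; all of these hold under the hypotheses $n\ge 3$ and $k\le n-3$. Granting this, the argument is purely combinatorial, relying only on Lemma \ref{lemma-triangle} and the dimension formulas for star- and top-triangles.
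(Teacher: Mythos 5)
Your proof is correct and follows essentially the same route as the paper: every maximal clique of $\Gamma_{k}(\Pi)$ contains a triangle, Lemma \ref{lemma-triangle} sends it to a triangle, and a triangle of $\Gamma_{k'}(\Pi')$ cannot lie simultaneously in a star and a top. The only (immaterial) difference is in the last step, where the paper observes that two maximal cliques of different types meet in at most a line (which cannot contain a triangle), while you invoke the dimension invariants distinguishing star-triangles from top-triangles; both justifications are routine.
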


\begin{proof}
In this case, every maximal clique of $\Gamma_{k}(\Pi)$ contains a triangle.
On the other hand, the intersection of two maximal cliques of different types
is empty or a one-element set or a line. 
Lemma \ref{lemma-triangle} gives the claim.
\end{proof}

\subsection{Proof of Proposition \ref{prop0}}
Suppose that $f$ is an embedding of $\Gamma_{0}(\Pi)$ in $\Gamma_{m}(\Pi')$
(see Remark \ref{rem0}) and $n\ge 4$. 

Let $M$ be a maximal singular subspace of $\Pi$.
We take any maximal singular subspace $M'$ such that 
$M\cap M'$ is $(n-2)$-dimensional.
Consider two maximal cliques of $\Gamma_{m}(\Pi')$ containing $f(M)$ and $f(M')$.
By Subsection 2.1, they are distinct.
The intersection of these cliques contains more than one element.
Moreover, this intersection is a line of ${\mathcal G}_{m}(\Pi')$ 
if one of the cliques is a top.
Then $f(M\cap M')$ is contained in a line. 
The latter contradicts Lemma \ref{lemma-triangle},
since the dimension of $M\cap M'$ is equal to $n-2\ge 2$.

Thus $f(M)$ is contained in a star. 
Hence there exists $S\in {\mathcal G}_{m-1}(\Pi')$
such that
$$f(M)\subset [S\rangle_{m}.$$
Let $p\in P\setminus M$.
It follows from the axiom (P3) that 
the set of all points of $M$ collinear to $p$ is an $(n-2)$-dimensional singular subspace.
Denote by $M'$ the maximal singular subspace spanned by this subspace and the point $p$.
As above, we obtain that 
$$f(M')\subset [S'\rangle_{m}$$
for a certain $S'\in {\mathcal G}_{m-1}(\Pi')$.
The singular subspace $M\cap M'$ contains more than one point
and for any distinct points $t,q\in M\cap M'$ we have
$$S=f(t)\cap f(q)=S'.$$
Therefore, $f(p)\in [S\rangle_{m}$ for every point $p\in P$.

So, the image of $f$ is contained in $[S\rangle_{m}$
and $f$ is a collinearity preserving injection of $\Pi$ to $\Pi'_{S}$.
The rank of $\Pi'_{S}$ is equal to $n'-m$ and it is not less than $n$ which implies 
that $m\le n'-n$.

\subsection{Technical result}
Let $f$ be an isometric embedding of $\Gamma_{k}(\Pi)$ in $\Gamma_{k'}(\Pi')$ 
and $1\le k\le n-3$.
Then maximal cliques of $\Gamma_{k}(\Pi)$ are stars and tops
and there exist pairs of distinct maximal cliques whose intersections contain more than 
one element.
In the case when $k'\ge n'-2$,
there is only one type of maximal cliques in $\Gamma_{k'}(\Pi')$ and
the intersection of any two distinct maximal cliques contains at most one element.
It was noted in Subsection 2.1 that 
$f$ sends distinct maximal cliques to subsets of distinct maximal cliques.
This guarantees that $k'\le n'-3$.
Also, the existence of isometric embeddings of $\Gamma_{k}(\Pi)$ in $\Gamma_{k'}(\Pi')$
implies that the diameter of $\Gamma_{k}(\Pi)$ is not greater than 
the diameter of $\Gamma_{k'}(\Pi')$.
By Subsection 2.4. the diameters of these graphs are equal to $k+2$ and $k'+2$, respectively. 
Therefore, $k\le k'$.

\begin{prop}\label{prop-tech}
If $f$ transfers stars to subsets of stars
then 
$$n-k\le n'-k'$$
and there exists a $(k'-k-1)$-dimensional singular subspace $S$ of $\Pi'$
such that the image of $f$ is contained in $[S\rangle_{k'}$
and $f$ is induced by a collinearity preserving injection of $\Pi$ to $\Pi'_{S}$.
\end{prop}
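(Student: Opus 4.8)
The plan is to study $f$ one big star at a time and then glue the pieces together. Fix $S_{0}\in {\mathcal G}_{k-1}(\Pi)$. The big star $[S_{0}\rangle_{k}$ is the point set of the polar space $\Pi_{S_{0}}$ of rank $n-k$, and the restriction of $f$ to it is an embedding of $\Gamma_{0}(\Pi_{S_{0}})$ into $\Gamma_{k'}(\Pi')$; this restriction is automatically isometric since $\Gamma_{0}$ has diameter $2$ (Remark \ref{rem0}). Under the identification of $[S_{0}\rangle_{k}$ with the points of $\Pi_{S_{0}}$, the stars $[S_{0},U]_{k}$ are exactly the maximal singular subspaces of $\Pi_{S_{0}}$, so the hypothesis that $f$ sends stars to subsets of stars says precisely that this restriction sends maximal singular subspaces of $\Pi_{S_{0}}$ to subsets of stars of ${\mathcal G}_{k'}(\Pi')$. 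That is the property established inside the proof of Proposition \ref{prop0}, so I can run that argument: when $k\le n-4$ the rank $n-k\ge 4$ lets me apply Proposition \ref{prop0} directly, while for $k=n-3$ the rank is $3$ and the triangle step is replaced by the standing hypothesis together with a connectivity argument among the planes of $\Pi_{S_{0}}$. In either case I obtain a $(k'-1)$-dimensional singular subspace $S(S_{0})$ of $\Pi'$ with $f([S_{0}\rangle_{k})\subseteq [S(S_{0})\rangle_{k'}$, and $f$ restricts to a collinearity preserving injection of $\Pi_{S_{0}}$ into $\Pi'_{S(S_{0})}$. Comparing the ranks $n-k$ and $n'-k'$ of these two polar spaces by the frame argument preceding Proposition \ref{prop0} gives $n-k\le n'-k'$.

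Before gluing I would describe $S(S_{0})$ intrinsically. If $X,Y\in [S_{0}\rangle_{k}$ are adjacent with $X\cap Y=S_{0}$, then they lie in the common star $[S_{0},U]_{k}$, hence $f(X)$ and $f(Y)$ lie in a common star and $\dim(f(X)\cap f(Y))=k'-1$; since $S(S_{0})\subseteq f(X)\cap f(Y)$ and $\dim S(S_{0})=k'-1$, this forces $S(S_{0})=f(X)\cap f(Y)$. More globally, the images of a frame of $\Pi_{S_{0}}$ lie in the image of $f$ and the corresponding subspaces of $\Pi'$ intersect exactly in $S(S_{0})$, so $S(S_{0})=\bigcap\{f(X):X\in [S_{0}\rangle_{k}\}$.

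The main obstacle is the gluing step: isolating, inside the family $\{S(S_{0})\}$, a single fixed $(k'-k-1)$-dimensional subspace $S$. The expected picture is a splitting $S(S_{0})=\langle S,T(S_{0})\rangle$, where $S$ is the fixed global part and $T(S_{0})$ is a $(k-1)$-dimensional subspace of $\Pi'$ depending on $S_{0}$; separating the fixed summand from the moving one is the crux. Concretely I would compare $S(S_{0})$ and $S(S_{1})$ for adjacent $S_{0},S_{1}$ in $\Gamma_{k-1}(\Pi)$, i.e. with $\dim(S_{0}\cap S_{1})=k-2$: the subspace $X_{0}=\langle S_{0},S_{1}\rangle$ then lies in both big stars, so the $k'$-dimensional subspace $f(X_{0})$ contains both hyperplanes $S(S_{0})$ and $S(S_{1})$, and tracking how the intrinsic description $S(S_{0})=f(X)\cap f(Y)$ behaves as one passes to the neighbour should show that the fixed part $S$ persists. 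Propagating this along paths in the connected graph $\Gamma_{k-1}(\Pi)$ then yields one $S$ contained in every $S(S_{0})$, whence $S\subseteq f(X)$ for all $X$ and the image of $f$ lies in $[S\rangle_{k'}$. The delicate point is to guarantee that the common part has exactly dimension $k'-k-1$, uniformly over all $S_{0}$.

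Finally I would reconstruct the inducing map. For a point $p$ of $\Pi$ I expect $g(p):=\bigcap_{X\ni p}f(X)$ to be a $(k'-k)$-dimensional singular subspace containing $S$, i.e. a point of $\Pi'_{S}$. The collinearity preserving injections already obtained on the individual big stars force $g$ to be injective and to preserve both collinearity and non-collinearity, so $g$ is a collinearity preserving injection of $\Pi$ into $\Pi'_{S}$; it then remains to check $f(X)=\langle g(p):p\in X\rangle$ for every $X\in {\mathcal G}_{k}(\Pi)$, which says exactly that $f$ is induced by $g$. Besides the gluing, the two things to watch are that $g$ preserves collinearity across different big stars and not merely within each one, and that the rank-$3$ case $k=n-3$, where Proposition \ref{prop0} is unavailable, is genuinely covered by the substitute argument indicated above.
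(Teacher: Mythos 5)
Your first step is sound and essentially parallels the paper's: restricting $f$ to a big star $[S_{0}\rangle_{k}$ and running the argument of Proposition \ref{prop0} inside $\Pi_{S_{0}}$ does produce, for each $S_{0}\in{\mathcal G}_{k-1}(\Pi)$, a $(k'-1)$-dimensional singular subspace $S(S_{0})$ with $f([S_{0}\rangle_{k})\subset[S(S_{0})\rangle_{k'}$; this is Lemma \ref{lemma1-1} together with the induced map $f_{k-1}$, and the rank comparison giving $n-k\le n'-k'$ is correct. The problem is that everything after this is announced rather than proved, and the step you yourself label ``the crux'' is the entire content of the proposition. Comparing $S(S_{0})$ and $S(S_{1})$ for adjacent $S_{0},S_{1}$ only says that two hyperplanes of the $k'$-dimensional subspace $f(\langle S_{0},S_{1}\rangle)$ meet in dimension at least $k'-2$; nothing in your sketch forces $\bigcap_{S_{0}}S(S_{0})$ to stabilize at dimension exactly $k'-k-1$, nor to be independent of the chain of neighbours used. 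The paper resolves this not by a one-step gluing but by iterating the whole construction: it shows that $f_{k-1}$ is injective (Lemma \ref{lemma1-2}, which requires a geodesic with $d(X,Y)\ge 3$ and hence uses that $f$ is isometric on all of $\Gamma_{k}(\Pi)$ --- information invisible inside a single big star, where all distances are at most $2$), that $f_{k-1}$ sends tops to subsets of tops and stars to subsets of stars (Lemma \ref{lemma1-3}), hence itself induces $f_{k-2}$, and so on down to $f_{0}:P\to{\mathcal G}_{k'-k}(\Pi')$; only then is the fixed $S$ extracted, by applying the Proposition \ref{prop0} argument once more to $f_{0}$ (Lemma \ref{lemma1-4}). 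Each descent step needs its own injectivity lemma with a longer geodesic ($d\ge 4$, and so on), so the descent cannot be bypassed by local comparisons of neighbouring big stars.

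The two ``things to watch'' in your last paragraph are likewise genuine unproved obligations, not routine checks. That $g(p)=\bigcap_{X\ni p}f(X)$ is a $(k'-k)$-dimensional singular subspace is exactly the statement that the descent terminates correctly (it is the paper's $f_{0}(p)$), and preservation of non-collinearity across different big stars is proved in the paper by a frame argument: for non-collinear $p,q$ one builds $X,Y$ with $d(X,Y)=k+2$ from a frame containing them and shows via Lemma \ref{lemma-dist} that collinearity of $f_{0}(p)$ and $f_{0}(q)$ would force $d(f(X),f(Y))\le k+1$. None of this follows from the per-big-star data you have assembled, so the proposal identifies the correct skeleton but leaves its load-bearing steps as conjectures.
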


Proposition \ref{prop-tech} will be proved in several steps.
Our first step is the following.

\begin{lemma}\label{lemma1-1}
$f$ transfers big stars to subsets of big stars.
\end{lemma}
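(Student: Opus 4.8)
Let me recall the precise claim: $f$ is an isometric embedding of $\Gamma_k(\Pi)$ in $\Gamma_{k'}(\Pi')$ with $1\le k\le n-3$, and we are assuming $f$ transfers stars to subsets of stars. I must show $f$ transfers big stars to subsets of big stars. A big star in ${\mathcal G}_k(\Pi)$ is a set $[S\rangle_k$ with $S\in{\mathcal G}_{k-1}(\Pi)$; by the discussion in Subsection 2.3, $[S\rangle_k$ together with the lines of ${\mathcal G}_k(\Pi)$ it contains is the polar space $\Pi_S$ of rank $n-k$, and its maximal cliques (stars and tops) are exactly the maximal singular subspaces and lines of $\Pi_S$. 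So I am really trying to locate the image of a polar subgeometry.

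**The plan.** First I would fix $S\in{\mathcal G}_{k-1}(\Pi)$ and examine how the stars inside the big star $[S\rangle_k$ sit together. Each star $[S,U]_k$ with $U\in{\mathcal G}_{n-1}(\Pi)$ and $S\subset U$ is a maximal singular subspace of $\Pi_S$; by hypothesis $f$ sends it into some star of ${\mathcal G}_{k'}(\Pi')$, say into $[S'_U,U'\rangle_{k'}$ for some $S'_U\in{\mathcal G}_{k'-1}(\Pi')$. The key idea is to show that all these subspaces $S'_U$, as $U$ ranges over the maximal singular subspaces through $S$, share a common $(k-1)$-dimensional ``core,'' namely a fixed $S'\in{\mathcal G}_{k'-1}(\Pi')$ with $f([S\rangle_k)\subset[S'\rangle_{k'}$. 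To find this common core I would pick two maximal singular subspaces $U_1,U_2$ through $S$ that meet in an $(n-2)$-dimensional subspace containing $S$ (so the corresponding stars in $\Pi_S$ are adjacent maximal cliques) and track where their common line goes. Concretely, the two stars $[S,U_1]_k$ and $[S,U_2]_k$ intersect in a line of ${\mathcal G}_k(\Pi)$, and since $f$ is a clique-map sending distinct maximal cliques to distinct maximal cliques, the images are distinct stars meeting in the image of that line; comparing the defining $(k'-1)$-subspaces $S'_{U_1}$ and $S'_{U_2}$ of these two image stars forces them to agree, because any two vertices on the shared image line already pin down the subspace $S'$ as their intersection, exactly as in the computation $S=f(t)\cap f(q)=S'$ used in the proof of Proposition \ref{prop0}.

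**Propagating the core across the whole big star.** Having matched the core for adjacent stars, I would propagate the equality $S'_{U_1}=S'_{U_2}=S'$ across all stars of $[S\rangle_k$ by a connectedness argument: the maximal singular subspaces $U$ containing $S$ form the dual polar geometry of $\Pi_S$, whose graph $\Gamma_{n-k-1}(\Pi_S)$ is connected, so I can pass from any $U$ to any other $U^\*$ through a chain of pairwise-adjacent maximal singular subspaces, at each step invoking the adjacent-star argument to carry the common core $S'$ along unchanged. This yields a single $S'\in{\mathcal G}_{k'-1}(\Pi')$ with $f([S,U]_k)\subset[S'\rangle_{k'}$ for every maximal $U\supset S$, and since the stars $[S,U]_k$ cover the big star $[S\rangle_k$ (every $k$-subspace containing $S$ lies in some maximal singular subspace), we get $f([S\rangle_k)\subset[S'\rangle_{k'}$, which is precisely the assertion that $f$ sends the big star to a subset of a big star.

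**Main obstacle.** The hard part will be the matching step for adjacent stars, i.e. verifying that the two image stars $[S'_{U_1}\rangle_{k'}$ and $[S'_{U_2}\rangle_{k'}$ genuinely share their $(k'-1)$-dimensional cores rather than merely overlapping in a line for some accidental reason. I would need to rule out the possibility that the image of the shared line $[S,U_1]_k\cap[S,U_2]_k$ lies in a \emph{top} of $\Gamma_{k'}(\Pi')$ instead of reflecting a common star-core; here I expect to lean on the standing hypothesis that $f$ transfers stars to subsets of stars, together with Lemma \ref{lemma-triangle} (triangles go to triangles) to detect which image cliques are stars. A secondary subtlety is bookkeeping the dimension count so that the core $S'$ has the correct dimension $k'-1$ and is independent of $U$; once the core is identified, the dimension and the inequality $n-k\le n'-k'$ are deferred to the later steps of Proposition \ref{prop-tech} and need not be settled inside this lemma.
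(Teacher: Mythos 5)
Your proof is correct and follows essentially the same route as the paper: decompose the big star $[S\rangle_{k}$ into the stars $[S,U]_{k}$ over maximal $U\supset S$, observe that for adjacent $U_{1},U_{2}$ the two image stars share at least two elements and hence the same $(k'-1)$-dimensional core, and propagate by connectedness of the dual polar graph on $[S\rangle_{n-1}$. The ``main obstacle'' you flag is already dispatched by the standing hypothesis that stars go to subsets of stars, exactly as you suspect.
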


\begin{proof}
Let $S\in {\mathcal G}_{k-1}(\Pi)$.
The big star $[S\rangle_{k}$ is the union of all stars $[S,U]_{k}$,
where $U$ is a maximal singular subspace of $\Pi$ containing $S$.
By our assumption, $f$ transfers stars $[S,X]_{k}$ and $[S,Y]_{k}$
to subsets contained in some stars $[S',X']_{k'}$ and $[S'',Y']_{k'}$, respectively.
If $X$ and $Y$ are adjacent vertices of $\Gamma_{n-1}(\Pi)$, 
i.e. the dimension of $X\cap Y$ is equal to $n-2\ge k+1$, 
then the intersection of $[S,X]_{k}$ and $[S,Y]_{k}$ contains more than one element
and the same holds for the intersection of $[S',X']_{k'}$ and $[S'',Y']_{k'}$.
The latter is possible only in the case when $S'=S''$.
Since the restriction of the graph $\Gamma_{n-1}(\Pi)$ to $[S\rangle_{n-1}$ is connected, 
there exists $S'\in {\mathcal G}_{k'-1}(\Pi')$ such that 
the images of all stars $[S,X]_{k}$ are contained in some stars of type $[S',X']_{k'}$.
This means that $f$ transfers the big star $[S\rangle_{k}$ 
to a subset of the big star $[S'\rangle_{k'}$.
\end{proof}

The intersection of two distinct big stars contains at most one element.
This implies that for any $S\in {\mathcal G}_{k-1}(\Pi)$
there is unique $S'\in{\mathcal G}_{k'-1}(\Pi')$ such that 
$f([S\rangle_{k})$ is contained in $[S'\rangle_{k'}$.
So, our embedding induces a mapping 
$$f_{k-1}:{\mathcal G}_{k-1}(\Pi)\to {\mathcal G}_{k'-1}(\Pi')$$
such that 
$$f([S\rangle_{k})\subset[f_{k-1}(S)\rangle_{k'}$$
for every $S\in {\mathcal G}_{k-1}(\Pi)$.

\begin{lemma}\label{lemma1-2}
The mapping $f_{k-1}$ is injective. 
\end{lemma}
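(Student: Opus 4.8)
The plan is to argue by contradiction. Suppose $S_{1},S_{2}\in{\mathcal G}_{k-1}(\Pi)$ are distinct but $f_{k-1}(S_{1})=f_{k-1}(S_{2})=S'$. Then both big stars $[S_{1}\rangle_{k}$ and $[S_{2}\rangle_{k}$ are sent by $f$ into the single big star $[S'\rangle_{k'}$. The first observation is that any two distinct elements of $[S'\rangle_{k'}$ lie at distance at most $2$ in $\Gamma_{k'}(\Pi')$: two such $k'$-dimensional singular subspaces both contain the $(k'-1)$-dimensional subspace $S'$, so their intersection is exactly $S'$, and Lemma \ref{lemma-dist} (applicable since $k'\le n'-3$, as established above) gives distance $1$ or $2$. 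Consequently it suffices to produce $X\in[S_{1}\rangle_{k}$ and $Y\in[S_{2}\rangle_{k}$ with $d(X,Y)\ge 3$ in $\Gamma_{k}(\Pi)$: since $f$ is injective we have $f(X)\ne f(Y)$, both lie in $[S'\rangle_{k'}$, and isometry then forces $d(X,Y)=d(f(X),f(Y))\le 2$, a contradiction.

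To construct such $X,Y$ I would pass to a frame. By the frame property choose a frame whose subsets span both $S_{1}$ and $S_{2}$, and work in the associated apartment, the thin polar space on $\{\pm 1,\dots,\pm n\}$ in which two frame points $i,j$ are collinear if and only if $j\ne -i$. Write $R=S_{1}\cap S_{2}$ and $t=\dim R$; since $S_{1}\ne S_{2}$ are both $(k-1)$-dimensional we have $t\le k-2$, and note $X\cap Y\supseteq R$ for any extensions. If $t\le k-3$, extend $S_{1}$ and $S_{2}$ by two fresh frame points to obtain singular $k$-dimensional $X\supset S_{1}$ and $Y\supset S_{2}$ with $X\cap Y=R$; then Lemma \ref{lemma-dist} gives $d(X,Y)\ge k-t\ge 3$. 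The room required here, namely $k+1\le n$, is guaranteed by $k\le n-3$.

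The critical case $t=k-2$ is the main obstacle, because then every $X\supset S_{1}$ and $Y\supset S_{2}$ satisfy $\dim(X\cap Y)\ge k-2$, so a distance of $3$ can be reached only through the second possibility of Lemma \ref{lemma-dist}. Writing $S_{1}=\langle R,a\rangle$ and $S_{2}=\langle R,b\rangle$, I would set $X=\langle R,a,-b\rangle$ and $Y=\langle R,b,-a\rangle$ when $b\ne -a$, and, in the degenerate case $b=-a$, pick a fresh frame point $c$ and set $X=\langle R,a,c\rangle$, $Y=\langle R,-a,-c\rangle$. In either case one checks that $X,Y$ are singular $k$-dimensional subspaces meeting exactly in $R$, that each point of $X\setminus Y$ has its unique non-collinear frame-partner in $Y$, and symmetrically for $Y\setminus X$; thus the pair falls under possibility (2) of Lemma \ref{lemma-dist} and $d(X,Y)=k-(k-2)+1=3$. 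This yields the desired contradiction and establishes that $f_{k-1}$ is injective.

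Throughout, the routine points to verify are the singularity of the constructed subspaces and the intersection equality $X\cap Y=R$, which reduce to checking that the chosen frame points avoid $R$ and their own partners; the only genuine subtlety is recognizing that the maximal-overlap case $t=k-2$ cannot be handled by shrinking $X\cap Y$ and must instead be forced into the non-collinear regime of Lemma \ref{lemma-dist}.
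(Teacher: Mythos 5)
Your argument is correct and follows the paper's proof essentially verbatim: pass to a common frame, produce $X\in[S_{1}\rangle_{k}$ and $Y\in[S_{2}\rangle_{k}$ at distance at least $3$ (splitting into the cases $\dim(S_{1}\cap S_{2})\le k-3$ and $\dim(S_{1}\cap S_{2})=k-2$, the latter forced into possibility (2) of Lemma \ref{lemma-dist}), and contradict the fact that any two distinct elements of the big star $[S'\rangle_{k'}$ are at distance at most $2$. The only difference is that you spell out an explicit choice of $X$ and $Y$ inside the apartment where the paper merely asserts that such a choice exists.
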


\begin{proof}
Let $S$ and $U$ be distinct elements of ${\mathcal G}_{k-1}(\Pi)$.
We take any frame of $\Pi$ such that $S$ and $U$ are spanned by subsets of this frame.
The associated apartment ${\mathcal A}\subset{\mathcal G}_{k}(\Pi)$ 
contains $X\in [S\rangle_{k}$ and $Y\in [U\rangle_{k}$ satisfying  $d(X,Y)\ge 3$.
Indeed, if the dimension of $S\cap U$ is less than $k-2$ 
then we choose any $X\in {\mathcal A}\cap[S\rangle_{k}$ and 
$Y\in {\mathcal A}\cap[U\rangle_{k}$
such that 
$$X\cap Y=S\cap U.$$
In the case when $S\cap U$ is $(k-2)$-dimensional,
we require in addition that every point of 
$X\setminus Y$ is non-collinear to a certain point of $Y$.
See Lemma \ref{lemma-dist}.

If $f_{k-1}(S)$ coincides with $f_{k-1}(U)$ then 
the intersection of $f(X)$ and $f(Y)$ is $(k'-1)$-dimensional.
By Lemma \ref{lemma-dist},
$$d(f(X),f(Y))\le 2$$
which contradicts the fact that $f$ is an isometric embedding 
of $\Gamma_{k}(\Pi)$ in $\Gamma_{k'}(\Pi')$. 
\end{proof}

For every $U\in {\mathcal G}_{k}(\Pi)$ we have
$$f_{k-1}(\langle U]_{k-1})\subset \langle f(U)]_{k'-1},$$
i.e. $f_{k-1}$ transfers tops to subsets of tops
which implies that $f_{k-1}$ sends adjacent vertices of $\Gamma_{k-1}(\Pi)$
to adjacent vertices of $\Gamma_{k'-1}(\Pi')$. 
However, we cannot state that non-adjacent vertices of $\Gamma_{k-1}(\Pi)$
go to non-adjacent vertices of $\Gamma_{k'-1}(\Pi')$.

Suppose that $k\ge 2$. 
Then $f_{k-1}$ transfers maximal cliques of $\Gamma_{k-1}(\Pi)$
(stars and tops) to subsets of maximal cliques of $\Gamma_{k'-1}(\Pi')$. 
We do not show that $f_{k-1}$ is an isometric embedding of $\Gamma_{k-1}(\Pi)$
in $\Gamma_{k'-1}(\Pi')$. 
It is sufficient to prove the following.

\begin{lemma}\label{lemma1-3}
$f_{k-1}$ transfers stars to subsets of stars.
\end{lemma}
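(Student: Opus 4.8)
The plan is to take an arbitrary star $[S,M]_{k-1}$ of $\Gamma_{k-1}(\Pi)$, with $S\in{\mathcal G}_{k-2}(\Pi)$ and $M\in{\mathcal G}_{n-1}(\Pi)$, and to show that its image under $f_{k-1}$ lies in a star rather than a top. Since $f_{k-1}$ sends adjacent vertices to adjacent vertices, the image of this star is a clique of $\Gamma_{k'-1}(\Pi')$ and hence is contained in a maximal clique; because $1\le k'-1\le n'-3$, every maximal clique of $\Gamma_{k'-1}(\Pi')$ is a star or a top. So it suffices to prove that no top contains the image: then the maximal clique containing it must be a star. The main obstacle is that we cannot appeal here to the earlier lemma forbidding a maximal clique to lie in two cliques of different types, since that lemma is about \emph{embeddings} and $f_{k-1}$ is not known to preserve non-adjacency. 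This forces the direct argument below.

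Assume, for contradiction, that $f_{k-1}([S,M]_{k-1})\subset\langle W]_{k'-1}$ for some $W\in{\mathcal G}_{k'}(\Pi')$. For each $k$-dimensional singular subspace $B$ with $S\subset B\subset M$, the line $[S,B]_{k-1}$ is contained both in the star $[S,M]_{k-1}$ and in the top $\langle B]_{k-1}$. Since $f_{k-1}$ transfers tops to subsets of tops, $f_{k-1}([S,B]_{k-1})\subset\langle f(B)]_{k'-1}$, and by our assumption this image is also contained in $\langle W]_{k'-1}$. Hence every vertex of $f_{k-1}([S,B]_{k-1})$ is a $(k'-1)$-dimensional singular subspace lying in $W\cap f(B)$.

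I would then use the injectivity of $f_{k-1}$ (Lemma \ref{lemma1-2}) to pick two distinct vertices $Z_1,Z_2$ in $f_{k-1}([S,B]_{k-1})$; a line has at least two points, so this is legitimate. As $Z_1$ and $Z_2$ are adjacent in $\Gamma_{k'-1}(\Pi')$, the subspace $\langle Z_1,Z_2\rangle$ is $k'$-dimensional, and it is contained in $W\cap f(B)$. Therefore $\dim(W\cap f(B))\ge k'$, which together with $\dim W=k'=\dim f(B)$ forces $W=f(B)$.

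Since this holds for every $k$-dimensional $B$ with $S\subset B\subset M$, and there are at least two such subspaces (because the projective space $M/S$ has dimension $n-k\ge 3$), we obtain $f(B_1)=W=f(B_2)$ for distinct $B_1\ne B_2$, contradicting the injectivity of the isometric embedding $f$. Thus the image of $[S,M]_{k-1}$ is contained in no top, so it lies in a star, which completes the proof. I expect the only delicate point to be the bookkeeping that guarantees two distinct adjacent image vertices on the line $[S,B]_{k-1}$ (valid as well in the thin case, where each such line carries exactly two points), everything else being a dimension count.
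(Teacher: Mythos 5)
Your proof is correct and follows essentially the same route as the paper: assume the image of a star lies in a top, note that (at least two) tops $\langle B]_{k-1}$ with $S\subset B\subset M$ meet the star in a line, so their images under $f_{k-1}$ share more than one element with the assumed top, forcing $f(B)=W$ for distinct $B$ and contradicting the injectivity of $f$. The only difference is that you spell out, via a dimension count, the fact the paper uses implicitly — that two distinct tops of ${\mathcal G}_{k'-1}(\Pi')$ intersect in at most one element.
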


\begin{proof}
Suppose that there exists a star ${\mathcal S}\subset {\mathcal G}_{k-1}(\Pi)$
whose image is not contained in a star of ${\mathcal G}_{k'-1}(\Pi')$.
Then $f_{k-1}({\mathcal S})$ is a subset in a certain top 
${\mathcal T}\subset {\mathcal G}_{k'-1}(\Pi')$.
We choose distinct $U_{1},U_{2}\in {\mathcal G}_{k}(\Pi)$
such that for every $i=1,2$ the top $\langle U_{i}]_{k-1}$
intersects the star ${\mathcal S}$ in a line.
Then the intersection of $\langle f(U_{i})]_{k'-1}$ and ${\mathcal T}$ contains more than one element.
This is possible only in the case when 
the tops $\langle f(U_{i})]_{k'-1}$ and ${\mathcal T}$ are coincident.
Hence $f(U_{1})=f(U_{2})$ which contradicts the fact that $f$ is injective. 
\end{proof}

Using Lemmas \ref{lemma1-2}, \ref{lemma1-3}
and the arguments from the proof of Lemma \ref{lemma1-1}, we show that $f_{k-1}$ 
transfers big stars to subsets of big stars
and the image of every big star of ${\mathcal G}_{k-1}(\Pi)$
is contained in a unique big star of ${\mathcal G}_{k'-1}(\Pi')$.
So, $f_{k-1}$ induces a mapping 
$$f_{k-2}:{\mathcal G}_{k-2}(\Pi)\to{\mathcal G}_{k'-2}(\Pi').$$
If $S$ and $U$ are distinct elements of ${\mathcal G}_{k-2}(\Pi)$ 
then there exist $X\in [S\rangle_{k}$ and $Y\in [U\rangle_{k}$ satisfying  
$d(X,Y)\ge 4$ (as in the proof of Lemma \ref{lemma1-2}, 
we take a frame of $\Pi$ such that $S$ and $U$ are spanned by subsets of this frame
and choose $X,Y$ in the associated apartment of ${\mathcal G}_{k}(\Pi)$).
It is clear that 
$$f_{k-2}(S)\subset f(X)\;\mbox{ and }\;f_{k-2}(U)\subset f(Y).$$
If $f_{k-2}(S)$ coincides with $f_{k-2}(U)$ then 
the dimension of the intersection of $f(X)$ and $f(Y)$ is not less than $k'-2$.
Lemma \ref{lemma-dist} shows that
$$d(f(X),f(Y))\le 3$$
and $f$ is not an isometric embedding of $\Gamma_{k}(\Pi)$ in $\Gamma_{k'}(\Pi')$.
Therefore, $f_{k-2}$ is injective.

The mapping $f_{k-2}$ transfers tops to subsets of tops.
As in the proof of Lemma \ref{lemma1-3},
we show that $f_{k-2}$ sends stars to subsets of stars if $k\ge 3$.
Step by step, we obtain a sequence of injections 
$$f_{i}:{\mathcal G}_{i}(\Pi)\to {\mathcal G}_{k'-k+i}(\Pi'),\;\;\;i=k,k-1,\dots,0$$
such that $f_{k}=f$ and 
$$f_{i}([S\rangle_{i})\subset [f_{i-1}(S)\rangle_{k'-k+i}$$
for every $S\in {\mathcal G}_{i-1}(\Pi)$ if $i\ge 1$.
Then 
$$
f_{i-1}(\langle U]_{i-1})\subset \langle f_{i}(U)]_{k'-k+i-1}$$
for every $U\in {\mathcal G}_{i}(\Pi)$.
Also, we have
$$
f_{0}(\langle U]_{0})\subset \langle f_{i}(U)]_{k'-k}
$$
for all $U\in {\mathcal G}_{i}(\Pi)$ and $i\le k$.
Indeed, if $U\in {\mathcal G}_{i}(\Pi)$ then $\langle U]_{0}$ 
is the union of all $\langle S]_{0}$ such that $S\in \langle U]_{i-1}$ and
the latter inclusion can be proved by induction.

\begin{lemma}\label{lemma1-4}
There exists $S\in {\mathcal G}_{k'-k-1}(\Pi')$
such that the image of $f$ is contained in the big star $[S\rangle_{k'}$.
\end{lemma}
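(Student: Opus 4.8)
The plan is to work entirely with the bottom map of the tower, $f_{0}\colon{\mathcal G}_{0}(\Pi)=P\to{\mathcal G}_{k'-k}(\Pi')$, and to produce $S$ by first locating it on each maximal singular subspace of $\Pi$. If $k=k'$ the assertion is vacuous (take $S=\emptyset$), so assume $k<k'$; then $1\le k'-k\le n'-3$ and the maximal cliques of $\Gamma_{k'-k}(\Pi')$ are exactly stars and tops. For a maximal singular subspace $M$ of $\Pi$ I would show that $f_{0}(M)$ is contained in a single big star $[S_{M}\rangle_{k'-k}$, i.e. that one fixed $(k'-k-1)$-dimensional subspace $S_{M}$ lies in every $f_{0}(p)$, $p\in M$. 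Granting this for all $M$, I would glue the $S_{M}$ into a single subspace $S$ and then lift: every point lies in some maximal singular subspace, and since the tower gives $f_{0}(\langle X]_{0})\subset\langle f(X)]_{k'-k}$, hence $f_{0}(p)\subset f(X)$ for $p\in X\in{\mathcal G}_{k}(\Pi)$, we obtain $S\subset f_{0}(p)\subset f(X)$ and thus $f(X)\in[S\rangle_{k'}$.

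To locate $S_{M}$, first note that $f_{0}$ sends collinear points to adjacent vertices: if $p,q$ lie on a line $L\in{\mathcal G}_{1}(\Pi)$ then $f_{0}(p),f_{0}(q)\in\langle f_{1}(L)]_{k'-k}$, so they are adjacent. Hence $f_{0}(M)$ is a clique and lies in some maximal clique. The key step is to rule out a top. If $f_{0}(M)\subset\langle W]_{k'-k}$ with $\dim W=k'-k+1$, then for every line $L\subset M$ the (at least three, pairwise distinct) images $f_{0}(p)$, $p\in L$, are $(k'-k)$-dimensional subspaces contained in both $W$ and $f_{1}(L)$; two distinct such subspaces span each of these $(k'-k+1)$-dimensional spaces, forcing $f_{1}(L)=W$. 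As this holds for every line of $M$, and $M$ contains at least two distinct lines (its dimension is $n-1\ge k+2\ge3$), the injectivity of $f_{1}$ is violated. Therefore $f_{0}(M)$ lies in a star, which yields a $(k'-k-1)$-dimensional $S_{M}$ contained in every $f_{0}(p)$, $p\in M$; this $S_{M}$ is unique, since two distinct candidates would both lie in every $f_{0}(p)$ and force all these subspaces to coincide with their common span, against injectivity.

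The gluing is then immediate. If $M,M'$ are adjacent in $\Gamma_{n-1}(\Pi)$ then $\dim(M\cap M')=n-2\ge k+1\ge2$, so $M\cap M'$ contains two distinct points $p,q$; both $f_{0}(p)$ and $f_{0}(q)$ contain $\langle S_{M},S_{M'}\rangle$, whose dimension is at least $k'-k$ when $S_{M}\ne S_{M'}$, and being contained in the $(k'-k)$-dimensional subspaces $f_{0}(p),f_{0}(q)$ this would force $f_{0}(p)=f_{0}(q)$, against injectivity. Hence $S_{M}=S_{M'}$ across every edge of the dual polar graph, and since $\Gamma_{n-1}(\Pi)$ is connected all the $S_{M}$ coincide with a single $S\in{\mathcal G}_{k'-k-1}(\Pi')$; the lifting described above then finishes the proof. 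I expect the main obstacle to be exactly the top/star dichotomy for $f_{0}(M)$: the tower was built so that the upper maps preserve the two clique types, but $f_{0}$ is not yet known to be a graph embedding of $\Gamma_{0}(\Pi)$. An alternative would be to prove directly that $f_{0}$ is such an embedding and then invoke Remark \ref{rem0} and Proposition \ref{prop0}, but verifying that non-collinear points go to non-adjacent vertices looks delicate, which is why I would instead argue locally on maximal singular subspaces and lean on the injectivity of $f_{1}$.
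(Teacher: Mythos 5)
Your proposal is correct and follows essentially the same route as the paper: reduce to $f_{0}$, show $f_{0}(M)$ lies in a star (not a top) for each maximal singular subspace $M$ by exploiting the injectivity of $f_{1}$ on the lines of $M$, glue the resulting subspaces $S_{M}$ using pairs of common points of intersecting maximal singular subspaces, and lift via $S\subset f_{0}(p)\subset f(X)$. The only cosmetic difference is that the paper glues by the argument of Proposition \ref{prop0} (constructing, for each point $p$, a maximal singular subspace through $p$ meeting $M$ in dimension $n-2$) rather than by connectivity of the dual polar graph, and your parenthetical ``at least three'' images on a line should read ``at least two'' when $\Pi$ is thin --- but two is all your argument uses.
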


\begin{proof}
The case when $k'=k$ is trivial and we suppose that $k'>k$. 
The mapping $f_{0}$ is an injection sending lines of $\Pi$
to subsets in tops of ${\mathcal G}_{k'-k}(\Pi')$.
Hence it transfers collinear points of $\Pi$
to adjacent vertices of $\Gamma_{k'-k}(\Pi')$.
Let $M$ be a maximal singular subspace of $\Pi$.
Then $f_{0}(M)$ is a clique of $\Gamma_{k'-k}(\Pi')$.
If $f_{0}(M)$ is contained in a top
then $f_{0}$ transfers all lines of $M$ to subsets of the same top
which contradicts the fact that $f_{1}$ is injective.
Thus $f_{0}(M)$ is a subset in a certain star and 
there exists $S\in {\mathcal G}_{k'-k-1}(\Pi')$ such that
$$f_{0}(M)\subset [S\rangle_{k'-k}.$$
As in the proof of Proposition \ref{prop0} (Subsection 4.2),
we show that $f_{0}(p)$ belongs to $[S\rangle_{k'-k}$ for every point $p\in P$.
If $U\in {\mathcal G}_{k}(\Pi)$ and $p$ is a point of $U$ then 
$$S\subset f_{0}(p)\subset f(U),$$
i.e. $f(U)$ belongs to $[S\rangle_{k'}$.
\end{proof}

Therefore, $f$ is an isometric embedding of $\Gamma_{k}(\Pi)$
in $\Gamma_{k}(\Pi'_{S})$ and $f_{0}$ is an injection of $P$ to $[S\rangle_{k'-k}$ 
transferring lines of $\Pi$ to subsets in lines of $\Pi'_{S}$.

\begin{lemma}\label{lemma1-5}
Let $U\in {\mathcal G}_{i}(\Pi)$ and $i\le k$.
If $U$ is spanned by points $p_{1},\dots,p_{i+1}$
then $f_{i}(U)$ is spanned by 
$f_{0}(p_{1}),\dots,f_{0}(p_{i+1})$.
\end{lemma}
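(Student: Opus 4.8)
The plan is to induct on $i$, doing all the geometry inside the quotient polar space $\Pi'_{S}$ produced by Lemma \ref{lemma1-4}. The first thing I would check is that $S\subset f_{i}(U)$ for every $U\in{\mathcal G}_{i}(\Pi)$: picking any point $p$ of $U$, the relation $f_{0}(\langle U]_{0})\subset\langle f_{i}(U)]_{k'-k}$ gives $f_{0}(p)\subset f_{i}(U)$, and $f_{0}(p)\in[S\rangle_{k'-k}$ (from the proof of Lemma \ref{lemma1-4}) gives $S\subset f_{0}(p)$. Under the identification of $[S\rangle_{k'-k+i}$ with ${\mathcal G}_{i}(\Pi'_{S})$, each $f_{i}(U)$ then becomes an $i$-dimensional singular subspace of $\Pi'_{S}$ and each $f_{0}(p_{j})$ a point lying on it. In particular
$$\langle f_{0}(p_{1}),\dots,f_{0}(p_{i+1})\rangle\subset f_{i}(U),$$
so the whole assertion reduces to showing that the $i+1$ points $f_{0}(p_{1}),\dots,f_{0}(p_{i+1})$ are independent in $\Pi'_{S}$; their span is then $i$-dimensional and, being contained in the $i$-dimensional $f_{i}(U)$, coincides with it.

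The case $i=0$ is immediate. For the inductive step I would introduce, for each $j$, the hyperplane $U'_{j}:=\langle p_{l}:l\ne j\rangle\in{\mathcal G}_{i-1}(\Pi)$ of $U$. Since $U'_{j}$ is spanned by the $i$ points $\{p_{l}:l\ne j\}$, the induction hypothesis yields
$$f_{i-1}(U'_{j})=\langle f_{0}(p_{l}):l\ne j\rangle,$$
an $(i-1)$-dimensional subspace of $\Pi'_{S}$ contained in $\langle f_{0}(p_{1}),\dots,f_{0}(p_{i+1})\rangle$. The key leverage is that the auxiliary maps $f_{i-1}$ are injective while the hyperplanes $U'_{1},\dots,U'_{i+1}$ of $U$ are pairwise distinct, so the subspaces $f_{i-1}(U'_{1}),\dots,f_{i-1}(U'_{i+1})$ are pairwise distinct as well.

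Now, were the points $f_{0}(p_{1}),\dots,f_{0}(p_{i+1})$ dependent, their span would have dimension at most $i-1$; since it contains the $(i-1)$-dimensional $f_{i-1}(U'_{1})$ it would equal it, and by the same argument it would equal $f_{i-1}(U'_{j})$ for every $j$, forcing all the $f_{i-1}(U'_{j})$ to coincide and contradicting their distinctness. Hence the points are independent and the induction closes. I expect the only delicate bookkeeping to be the reduction to $\Pi'_{S}$, which is what makes $f_{i}(U)$ genuinely $i$-dimensional and turns the claim into a statement about independence of points; after that, the heart of the proof is the short dimension count in the last paragraph, which relies solely on the already-established injectivity of the maps $f_{i-1}$ together with the distinctness of the hyperplanes $U'_{j}$.
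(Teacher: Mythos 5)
Your proof is correct and follows essentially the same route as the paper's: induction on $i$, comparing the images under $f_{i-1}$ of hyperplanes of $U$ spanned by $i$ of the points, and concluding by injectivity of $f_{i-1}$ together with a dimension count. The paper simply uses the two hyperplanes $\langle p_{1},\dots,p_{i}\rangle$ and $\langle p_{2},\dots,p_{i+1}\rangle$ directly in $\Pi'$ (two distinct hyperplanes of $f_{i}(U)$ span it), whereas you pass to $\Pi'_{S}$ and argue by contradiction with all $i+1$ hyperplanes; the difference is only cosmetic.
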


\begin{proof}
We prove the statement by induction. The case when $i=0$ is trivial.
Suppose that $i\ge 1$ and 
consider the $(i-1)$-dimensional singular subspaces $M$ and $N$
spanned by $p_{1},\dots,p_{i}$ and $p_{2},\dots,p_{i+1}$,
respectively. 
By the inductive hypothesis, $f_{i-1}(M)$ and $f_{i-1}(N)$
are spanned by  
$$f_{0}(p_{1}),\dots,f_{0}(p_{i})\;\mbox{ and }\;f_{0}(p_{2}),\dots,f_{0}(p_{i+1}),$$
respectively.
The required statement follows from the fact that $f_{i-1}(M)$ and $f_{i-1}(N)$ are 
distinct $(k'-k+i-1)$-dimensional singular subspaces 
contained in the $(k'-k+i)$-dimensional singular subspace $f_{i}(U)$.
\end{proof}

Our last step is to show that $f_{0}$ sends non-collinear points of $\Pi$ to 
non-collinear points of $\Pi'_{S}$.

Let $p$ and $q$ be non-collinear points of $\Pi$.
Consider a frame 
$$p_{1}=p,p_{2},\dots,p_{n},q_{1}=q,q_{2},\dots,q_{n},$$
where every $p_{i}$ is non-collinear to $q_{i}$.
Denote by $X$ and $Y$  the $k$-dimensional singular subspaces spanned by 
$p_{1},\dots,p_{k+1}$ and $q_{1},\dots,q_{k+1}$, respectively.
Then 
$$d(X,Y)=k+2$$
(Lemma \ref{lemma-dist}).
By Lemma \ref{lemma1-5}, $f(X)$ and $f(Y)$ are $k$-dimensional subspaces of $\Pi'_{S}$
spanned by 
$$f_{0}(p_{1}),\dots, f_{0}(p_{k+1})\;\mbox{ and }\;f_{0}(q_{1}),\dots,f_{0}(q_{k+1}),$$
respectively.
The point $p_{1}$ is collinear to $q_{i}$ if $i\ne 1$.
Hence $f_{0}(p_{1})$ and $f_{0}(q_{i})$ are collinear points of $\Pi'_{S}$
if $i\ne 1$.
Therefore, if $f_{0}(p_{1})$ is collinear to $f_{0}(q_{1})$ 
then it is collinear to all points of $f(Y)$ which means that 
$$d(f(X),f(Y))\le k+1$$
(Lemma \ref{lemma-dist}).
The latter is impossible, since $f$ is an isometric embedding of 
$\Gamma_{k}(\Pi)$ in $\Gamma_{k'}(\Pi')$.
Thus $f_{0}(p)$ and $f_{0}(q)$ are non-collinear points of $\Pi'_{S}$.

So, $f_{0}$ is a collinearity preserving injection of $\Pi$ to $\Pi'_{S}$.
It follows from Lemma \ref{lemma1-5} that $f(U)$ coincides with 
$\langle f_{0}(U)\rangle$ for every $U\in {\mathcal G}_{k}(\Pi)$, 
i.e. $f$ is induced by $f_{0}$.
The rank of $\Pi'_{S}$ is equal to $n'-k'+k$.
The existence of collinearity preserving injections of $\Pi$ to $\Pi'_{S}$
implies that $n'-k'+k\ge n$.

\subsection{Proof of Theorem \ref{theorem-main1}}
Let $f$ be as in the previous subsection.
We need to show that $f$ transfers stars to subsets of stars
if $n\ge 5$ and $k\le n-4$. 

Suppose that there is a star ${\mathcal S}\subset{\mathcal G}_{k}(\Pi)$
such that $f({\mathcal S})$ is contained in a top of  ${\mathcal G}_{k'}(\Pi')$.
If $X,Y,Z\in {\mathcal S}$ form a triangle then 
their images form a top-triangle. 
The corresponding top is the unique maximal clique of $\Gamma_{k'}(\Pi')$
containing $f(X),f(Y),f(Z)$.
On the other hand, 
the singular subspace spanned by $X,Y,Z$
is $(k+2)$-dimensional.
Since $k\le n-4$, this singular subspace is not maximal.
This guarantees the existence 
of a star ${\mathcal S}'$ containing $X,Y,Z$ and different from ${\mathcal S}$.
By Subsection 2.1, $f({\mathcal S})$ and $f({\mathcal S}')$ are subsets of distinct 
maximal cliques of $\Gamma_{k'}(\Pi')$. 
Each of these cliques contains $f(X),f(Y),f(Z)$ and we get a contradiction.

\section{Proof of Theorem \ref{theorem-main2}}
In this section we suppose that $f$ is an isometric embedding of 
$\Gamma_{n-3}(\Pi)$ in $\Gamma_{k'}(\Pi')$
and $n\ge 4$. 
By Subsection 4.3, we have $n-3\le k'\le n'-3$.

\subsection{Regular pairs of triangles}
Let
$$\Delta=\{S_{1},S_{2},S_{3}\}\;\mbox{ and }\;
\Delta'=\{S'_{1},S'_{2},S'_{3}\}$$
be triangles in ${\mathcal G}_{k}(\Pi)$ such that $\Delta\cap\Delta'=\emptyset$.
We say that these triangles form a {\it regular pair} if 
$S_{i}$ and $S'_{j}$ are adjacent vertices of $\Gamma_{k}(\Pi)$ 
only in the case when $i\ne j$, in other words, 
every vertex from each of these triangles is adjacent to precisely two vertices of 
the other triangle. 
An easy verification shows that  
in this case one of the following possibilities is realized:
\begin{enumerate}
\item[(1)] There are $(k+2)$-dimensional singular subspaces $U$ and $U'$
whose intersection $S$ is $(k-1)$-dimensional 
and $\Delta,\Delta'$ are star-triangles contained in 
$[S,U]_{k}$ and $[S,U']_{k}$, respectively.
Note that for every point $q\in U\setminus S$ there is a point of $U'$
non-collinear to $q$. 
Similarly, for every point $q'\in U'\setminus S$ there is a point of $U$
non-collinear to $q'$.
\item[(2)] One of the triangles is a star-triangle and the other is a top-triangle.
For example, if $\Delta$ is a star-triangle and $\Delta'$ is a top-triangle then
the singular subspace $\langle S'_{1},S'_{2},S'_{3}\rangle$ is 
$(k+1)$-dimensional and 
there is a point $p\not\in\langle S'_{1},S'_{2},S'_{3}\rangle$
collinear to all points of $\langle S'_{1},S'_{2},S'_{3}\rangle$
and such that 
$$S_{1}=\langle p, S'_{2}\cap S'_{3}\rangle,\;
S_{2}=\langle p, S'_{1}\cap S'_{3}\rangle,\; S_{3}=\langle p, S'_{1}\cap S'_{2}\rangle.$$
Note that all elements of our triangles are contained in 
the $(k+2)$-dimensional singular subspace spanned by $p$
and  $\langle S'_{1},S'_{2},S'_{3}\rangle$.
\end{enumerate}
It is clear that $f$ transfers regular pairs of triangles to regular pairs of triangles.

\subsection{Proof of Theorem \ref{theorem-main2} for $n=4$}
Suppose that $n=4$.
Then $f$ is an isometric embedding of $\Gamma_{1}(\Pi)$ in $\Gamma_{k'}(\Pi')$.
A maximal singular subspace $U\in {\mathcal G}_{3}(\Pi)$
is said to be {\it special} if there exists a point $p\in U$ such that
$f$ transfers the star $[p,U]_{1}$ to a subset contained in a top.
In the case when there exist no special maximal singular subspaces,
we apply Proposition \ref{prop-tech}.

\begin{lemma}\label{lemma2-2}
If $U\in {\mathcal G}_{3}(\Pi)$ is special then 
for every point $q\in U$ the image of $[q,U]_{1}$ is contained in a top
and for every $2$-dimensional singular subspace $S\subset U$
the image of $\langle S]_{1}$ is a subset in a star.
\end{lemma}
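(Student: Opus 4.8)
The plan is to use the hypothesis that $f([p,U]_{1})$ lies in a top for some point $p\in U$, first to show that every top $\langle S]_{1}$ with $S\subset U$ is sent into a star, and then to deduce that every star $[q,U]_{1}$ with $q\in U$ is sent into a top. I note at the outset the two elementary facts that drive the argument: star-triangles lie only in stars and top-triangles only in tops, and two distinct tops of $\Gamma_{k'}(\Pi')$ share at most one vertex (their defining $(k'+1)$-dimensional subspaces meet in dimension at most $k'$).

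First I would handle the tops $\langle S]_{1}$, splitting according to whether $p\in S$. If $p\in S$, then $[p,U]_{1}\cap\langle S]_{1}=[p,S]_{1}$ is a line of ${\mathcal G}_{1}(\Pi)$, so its image under the injection $f$ has more than one vertex; since $f([p,U]_{1})$ lies in a top and two distinct tops meet in at most one vertex, the maximal clique containing $f(\langle S]_{1})$ cannot be a second top, as it would then coincide with the top containing $f([p,U]_{1})$, contradicting that distinct maximal cliques go to subsets of distinct maximal cliques (Subsection 2.1). Hence $f(\langle S]_{1})$ lies in a star. If $p\notin S$, I would instead use a regular pair of the second type from Subsection 5.1: take a top-triangle $\Delta'$ spanning $S$ and the associated star-triangle $\Delta\subset[p,U]_{1}$ formed by the lines $\langle p,v\rangle$ through the vertices $v$ of $\Delta'$, everything lying in $\langle p,S\rangle=U$. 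As $f([p,U]_{1})$ lies in a top, $f(\Delta)$ is a top-triangle; since $f$ carries regular pairs to regular pairs and no regular pair consists of two top-triangles (Subsection 5.1 lists only ``two star-triangles'' and ``one star- and one top-triangle''), $f(\Delta')$ must be a star-triangle, so $f(\langle S]_{1})$ again lies in a star. This establishes the second assertion for all $S\subset U$.

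Next I would treat the stars $[q,U]_{1}$, arguing by contradiction: suppose $f([q,U]_{1})$ lies in a star for some $q\in U$. Combining the assertion just proved with the fact that a star-triangle through $q$ has its image inside that star, one checks that every plane $S\subset U$ through $q$ is sent into a star and that all these stars, together with the one containing $f([q,U]_{1})$, share one common $(k'-1)$-dimensional center $T$. The delicate point is that this configuration is fully compatible with every incidence and every regular pair available inside $U$ alone, so no contradiction can be extracted from the combinatorics of $U$ by itself. I therefore expect to need the global isometry: choosing a frame with $U=\langle e_{1},\dots,e_{4}\rangle$, the assumption forces a pair of $1$-dimensional singular subspaces at distance $3$ in $\Gamma_{1}(\Pi)$ to have images at distance at most $2$ in $\Gamma_{k'}(\Pi')$ (via Lemma \ref{lemma-dist}), contradicting that $f$ is isometric. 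Alternatively one may invoke the triality self-duality $\Pi\cong\Pi_{\delta}$ together with the automorphism $g$ of $\Gamma_{1}(\Pi)$ from Subsection 2.5 to interchange the points and planes of $U$, turning the target statement ``$[q,U]_{1}\to$ top'' into an instance of the already-proved statement ``$\langle S]_{1}\to$ star''.

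The main obstacle is precisely this last step. The two conclusions of the lemma are \emph{not} symmetric, because stars and tops play asymmetric roles in the target graph $\Gamma_{k'}(\Pi')$: two distinct tops always meet in at most one vertex, whereas two distinct stars may share a whole line. This asymmetry is exactly what makes ``$\langle S]_{1}\to$ star'' provable by clean intersection and regular-pair arguments, and at the same time prevents those arguments from forcing ``$[q,U]_{1}\to$ top''. Completing the second half therefore requires genuinely global input, namely the isometry of $f$ (through a distance-$3$ configuration reaching outside $U$) or the $\textsf{D}_{4}$ triality.
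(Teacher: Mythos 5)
Your first half is sound. For $p\notin S$ your regular-pair argument is exactly the paper's, and for $p\in S$ your clique-intersection argument (the line $[p,S]_{1}$ has more than one element while two distinct tops of $\Gamma_{k'}(\Pi')$ meet in at most one vertex) is a clean variant: the paper instead treats the tops through $p$ last, deducing their behaviour from that of the stars $[q,U]_{1}$ with $q\notin S$.

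The genuine gap is in the second half. You assert that the hypothesis ``$f([q,U]_{1})$ is contained in a star'' is compatible with every incidence and every regular pair available inside $U$, and that completing the proof therefore requires global input; you then only sketch two candidate global arguments (a distance-$3$ configuration you do not construct, and a triality reduction that is unavailable here, since at this stage of the proof one does not yet know that $\Pi$ is of type $\textsf{D}_{4}$ -- that is a conclusion of Lemmas \ref{lemma2-3} and \ref{lemma2-4}, not a hypothesis). The assertion is false: the paper closes this step entirely inside $U$. Choose a $2$-dimensional $S\subset U$ containing neither $p$ nor $q$. By the part already proved, $f(\langle S]_{1})$ lies in a star $[T',U']_{k'}$, and because the image of a regular pair $\Delta_{p}\subset[p,U]_{1}$, $\Delta'\subset\langle S]_{1}$ is of type (2), the centre $T'$ of that star is \emph{not} contained in the subspace $S'$ with $f([p,U]_{1})\subset\langle S']_{k'}$. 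Now take a regular pair $\Delta\subset[q,U]_{1}$, $\Delta'\subset\langle S]_{1}$. If $f(\Delta)$ were a star-triangle, the pair $f(\Delta),f(\Delta')$ would be of type (1), and in a type (1) pair the two star-triangles share their centre; hence $f([q,U]_{1})\subset[T',U'']_{k'}$. But the vertex $\langle p,q\rangle$ belongs to both $[p,U]_{1}$ and $[q,U]_{1}$, so its image would lie in $\langle S']_{k'}\cap[T',U'']_{k'}$, which is empty because every element of the star contains $T'$ while every element of the top is contained in $S'$ and $T'\not\subset S'$. So $f(\Delta)$ is a top-triangle and $f([q,U]_{1})$ lies in a top. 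This purely local argument is precisely what your proposal is missing, and your claim that no such argument exists is what led you astray.
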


This statement is proved for the case when $k'=1$ and $n'=4$ 
\cite[Lemma 4.15]{Pankov-book1}.
Now we show that the same arguments work in the general case.

\begin{proof}
Let $p$ be a point of $U$ such that
$f([p,U]_{1})$ is contained in a top
$$\langle S']_{k'},\;\;S'\in {\mathcal G}_{k'+1}(\Pi').$$
We take any $2$-dimensional singular subspace $S\subset U$
which does not contain the point $p$. 
Consider a regular pair of triangles
$$\Delta\subset [p,U]_{1}\;\mbox{ and }\;\Delta'\subset \langle S]_{1}.$$
Since $f(\Delta)$ is a top-triangle, 
the triangles $f(\Delta)$ and $f(\Delta')$ form a regular pair of type (2).
Therefore, $f(\Delta')$ is a star-triangle and 
$f(\langle S]_{1})$ is contained in a certain star 
\begin{equation}\label{eq2-1}
[T',U']_{k'},\;\;T'\in {\mathcal G}_{k'-1}(\Pi'),\;U'\in {\mathcal G}_{n'-1}(\Pi').
\end{equation}
Note that $S'\subset U'$ and $T'\not\subset S'$.

Let $q$ be a point belonging to $U\setminus\{p\}$. 
We choose a $2$-dimensional singular subspace $S\subset U$
which does not contain $p$ and $q$.
It was established above that $f(\langle S]_{1})$ is a subset in a certain star \eqref{eq2-1}.
Consider a regular pair of triangles
$$\Delta\subset [q,U]_{1},\;\mbox{ and }\;\Delta'\subset \langle S]_{1}.$$
Then $f(\Delta')$ is a star-triangle. 
Suppose that the triangles $f(\Delta)$ and $f(\Delta')$ form a regular pair of type (1).
Then $f([q,U]_{1})$ is contained in $[T', U'']_{k'}$
for a certain $U''\in {\mathcal G}_{n'-1}(\Pi')$.
Since $T'\not\subset S'$, 
the top $\langle S']_{k'}$ and the star $[T', U'']_{k'}$ are disjoint.
On the other hand, 
$$f([p,U]_{1})\subset \langle S']_{k'},\;\;f([q,U]_{1})\subset[T', U'']_{k'}$$
and the stars $[p,U]_{1}$, $[q,U]_{1}$ both contain the line joining $p$ and $q$.
This contradiction shows that $f(\Delta)$ and $f(\Delta')$ form a regular pair of type (2),
i.e. $f(\Delta)$ is a top-triangle. 
Then the image of $[q,U]_{1}$ is contained in a top.

Let $S$ be a $2$-dimensional singular subspace of $U$ containing the point $p$.
We take any point $q\in U\setminus S$. 
Then $f([q,U]_{1})$ is contained in a top. 
As above, we establish that $f(\langle S]_{1})$ is a subset in a star.
\end{proof}

\begin{lemma}\label{lemma2-3}
Let $U,Q\in \mathcal{G}_{3}(\Pi)$. If $U$ is special and $\dim(U\cap Q)=2$
then $Q$ is not special.
\end{lemma}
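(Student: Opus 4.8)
The plan is to argue by contradiction: assume that $Q$ is also special and produce a pair of vertices whose images are forced simultaneously to be adjacent and non-adjacent. First I would fix a point $p$ of the plane $S=U\cap Q$. Since $U$ is special and $p\in U$, Lemma \ref{lemma2-2} gives that $f([p,U]_{1})$ is contained in a top, say $T_{1}$; since $Q$ is special and $p\in Q$, the same lemma gives that $f([p,Q]_{1})$ is contained in a top $T_{2}$. The first thing to verify is that these two tops coincide. The pencil $[p,S]_{1}$ of lines through $p$ in $S$ lies in both $[p,U]_{1}$ and $[p,Q]_{1}$ (because $S\subset U$ and $S\subset Q$), so its image lies in $T_{1}\cap T_{2}$. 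As $[p,S]_{1}$ is a line of ${\mathcal G}_{1}(\Pi)$ and $f$ is injective, $T_{1}\cap T_{2}$ contains more than one element; since the members of a top $\langle V]_{k'}$ are the $k'$-dimensional subspaces of $V$ and two distinct $(k'+1)$-dimensional subspaces meet in dimension at most $k'$, two distinct tops meet in at most one element. Hence $T_{1}=T_{2}=:T$, and $f([p,U]_{1}\cup[p,Q]_{1})$ is contained in the single top $T$, which is a clique.

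The decisive step is then to find two lines through $p$, one lying in $U$ and one lying in $Q$, that are \emph{not} adjacent in $\Gamma_{1}(\Pi)$. This is possible precisely because lines through a common point need not be adjacent once they lie in different maximal singular subspaces. Concretely, I would pick a point $x\in U\setminus S$ and set $\ell_{1}=\langle p,x\rangle\in[p,U]_{1}$. Since $x\notin Q$, the same reasoning as in the proof of Proposition \ref{prop0} (axiom (P3)) shows that the points of $Q$ collinear to $x$ form a hyperplane $H_{x}$ of the solid $Q$, i.e.\ a plane; moreover $p\in H_{x}$ because $p$ and $x$ are collinear inside $U$. As $H_{x}$ is a proper plane of $Q$, there is a line $\ell_{2}\subset Q$ through $p$ with $\ell_{2}\not\subset H_{x}$, and choosing $y\in\ell_{2}\setminus H_{x}$ we get $x\not\sim y$, so $\langle\ell_{1},\ell_{2}\rangle$ is not singular and $\ell_{1}\not\sim\ell_{2}$.

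To finish, observe that $\ell_{1}\in[p,U]_{1}$ and $\ell_{2}\in[p,Q]_{1}$, so $f(\ell_{1})$ and $f(\ell_{2})$ both lie in the clique $T$ and are therefore adjacent; but $f$ is an embedding and preserves non-adjacency, whence $f(\ell_{1})\not\sim f(\ell_{2})$. This contradiction shows that $Q$ cannot be special. I expect the main obstacle to be this last geometric step, namely producing the non-adjacent pair $\ell_{1},\ell_{2}$: one has to exploit that a point of $U$ off $Q$ is collinear with only a hyperplane of $Q$, and to notice the (slightly counterintuitive) fact that two lines through the common point $p$ can fail to be adjacent. Once such a pair is in hand, together with the identification $T_{1}=T_{2}$, the clash between ``images in a common top'' and ``non-adjacency preserved by $f$'' is immediate.
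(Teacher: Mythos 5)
Your proof is correct and takes essentially the same route as the paper's: both arguments force the images of the two stars $[p,U]_{1}$ and $[p,Q]_{1}$ into a single common top (you via the shared pencil $[p,S]_{1}$ and the fact that distinct tops meet in at most one element, the paper via two star-triangles sharing the edge $\{L_{1},L_{2}\}$), and then derive a contradiction from a pair of lines through $p$, one in $U$ and one in $Q$, that are at distance $2$ in $\Gamma_{1}(\Pi)$ and hence have non-adjacent images.
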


\begin{proof}
We take two distinct lines $L_{1},L_{2}\subset U\cap Q$ and 
consider star-triangles 
$$\Delta=\{L_{1},L_{2},L_{3}\}\;\mbox{ and }\;
\Delta'=\{L_{1},L_{2},L'_{3}\}$$
contained in $\langle U]_{1}$ and $\langle Q]_{1}$, respectively.
The lines $L_{3}$ and $L'_{3}$ have a common point;
on the other hand, they contain non-collinear points
which means that the distance between them in $\Gamma_{1}(\Pi)$ is equal to $2$.
Thus 
$$d(f(L_{3}),f(L'_{3}))=2.$$
By our assumption, $f(\Delta)$ is a top-triangle.
If the same holds for $f(\Delta')$ then $f(L'_{3})$ belongs to the top 
containing $f(\Delta)$ which implies that $f(L_{3})$ and $f(L'_{3})$
are adjacent vertices of $\Gamma_{k'}(\Pi')$.
So, $f(\Delta')$ is a star-triangle. 
Then $f$ transfer the star containing $\Delta'$ to a subset in a star.
By Lemma \ref{lemma2-2}, $Q$ is not special.
\end{proof}

\begin{lemma}\label{lemma2-4} 
Let $U,Q\in \mathcal{G}_{3}(\Pi)$. If $U$ is special and $\dim(U\cap Q)=1$ then 
$Q$ is special.
\end{lemma}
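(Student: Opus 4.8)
The plan is to certify that $Q$ is special by transporting specialness from $U$ through a common neighbour in the dual polar graph, and the natural reduction is to the definition itself: by Lemma \ref{lemma2-2} it suffices to produce a single point of $Q$ whose star is sent by $f$ into a top, or equivalently a single plane of $Q$ at which the star--top swap is witnessed. First I would set $L:=U\cap Q$ (a line, since $\dim(U\cap Q)=1$) and pass to the rank-two residue $\Pi_{L}$, the generalized quadrangle whose points are the planes through $L$ and whose lines are the maximal singular subspaces through $L$. There $U$ and $Q$ appear as two lines, and they are disjoint, because a common point would be a plane contained in $U\cap Q=L$, which is impossible. The generalized quadrangle axiom then furnishes a transversal, i.e. a maximal singular subspace $R\supset L$ with $\dim(U\cap R)=\dim(Q\cap R)=2$. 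Applying Lemma \ref{lemma2-3} to the pair $U,R$ shows that $R$ is \emph{not} special.

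Writing $\pi_{1}:=U\cap R$ and $\pi_{2}:=Q\cap R$, these are planes of $R$ through $L$ with $\pi_{1}\cap\pi_{2}=L$, and the second step is to move the behaviour of $f$ from $\pi_{1}$ to $\pi_{2}$ inside $R$. Since $U$ is special, Lemma \ref{lemma2-2} tells us that the top $\langle\pi_{1}]_{1}$ (a plane of $U$) goes into a star, so every top-triangle lying in $\pi_{1}$ is sent to a star-triangle; since $R$ is not special, every star $[x,R]_{1}$ goes into a star, so every star-triangle based at a point of $R$ is sent to a star-triangle. I would then fix an apex $q\in R\setminus(\pi_{1}\cup\pi_{2})$ and use the perspectivity from $q$ to pair a top-triangle $\Theta_{1}\subset\pi_{1}$ with the star-triangle $\Delta\subset[q,R]_{1}$ it cuts out, and simultaneously with the top-triangle $\Theta_{2}\subset\pi_{2}$ determined by the same three lines through $q$, so that $\Delta$ is the common star-triangle of the two regular pairs $\{\Delta,\Theta_{1}\}$ and $\{\Delta,\Theta_{2}\}$ of type (2). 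As $f$ carries regular pairs to regular pairs and both $f(\Delta)$ and $f(\Theta_{1})$ are already known to be star-triangles, an analysis of $\{f(\Delta),f(\Theta_{2})\}$ is meant to force the image of the top-triangles of $\pi_{2}$, hence the behaviour of $f$ on the plane $\pi_{2}\subset Q$, to be the one opposite to that of $R$, yielding the swap at $Q$.

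The hard part is the genuine asymmetry between stars and tops. Non-specialness of $R$ constrains only the images of \emph{stars}, never of tops, so the transfer ``tops of $\pi_{2}\to$ stars'' cannot be read off from $R$ alone and must be imported from the special subspace $U$; moreover, as the type-(1)/type-(2) description in Subsection 5.1 shows, regular pairs link stars only \emph{inside a single} maximal singular subspace (a star-triangle of $U$ and a star-triangle of $Q$ never form a regular pair when $\dim(U\cap Q)=1$), and $U,Q$ share no plane, so there is no local device directly coupling the two. Consequently the obstacle is precisely to rule out that the transformed pair $\{f(\Delta),f(\Theta_{2})\}$ degenerates to type (1) rather than certifying a genuine star$\to$top swap for $Q$. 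I expect to overcome this as in the proof of Lemma \ref{lemma2-3}, by combining the rigidity that two top-triangles sharing an edge must lie in one common top with the isometry of $f$ through Lemma \ref{lemma-dist}; but since distance-$2$ witnesses alone appear to be insufficient here, I anticipate needing witnesses of maximal distance $k+2=3$ and the full force of the isometry argument used in the final step of the proof of Proposition \ref{prop-tech}. A secondary subtlety is that the number of maximal singular subspaces through $\pi_{2}$ is exactly two in the $\textsf{D}_{4}$ case and at least three otherwise; as the presence of a special subspace is compatible only with the $\textsf{D}_{4}$ configuration, I would use this to identify $Q$ as the unique partner of $R$ at $\pi_{2}$ and to pin its image behaviour as opposite to that of $R$.
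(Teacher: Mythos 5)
Your proposal does not follow the paper's route, and as written it has genuine gaps at exactly the step you flag as ``the hard part''. First, the pivotal deduction is never made: after producing the transversal $R$ (fine), concluding via Lemma \ref{lemma2-3} that $R$ is not special (fine), and noting that $f(\Delta)$ and $f(\Theta_{1})$ are star-triangles, you have no mechanism that determines the type of $f(\Theta_{2})$ --- the image pair $\{f(\Delta),f(\Theta_{2})\}$ is consistent with both type (1) and type (2), and knowing the type of the \emph{other} pair $\{f(\Delta),f(\Theta_{1})\}$ does not constrain it. Worse, your stated goal is inverted: if the lemma is true, then $Q$ is special, Lemma \ref{lemma2-2} makes $f(\Theta_{2})$ a star-triangle, and $\{f(\Delta),f(\Theta_{2})\}$ \emph{is} of type (1); so ``ruling out type (1)'' for that pair would prove something false. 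What you would actually need is to \emph{establish} type (1) there and then, separately, rule out type (1) for a pair coupling $\Theta_{2}$ with a star-triangle of $Q$ --- and that second step cannot imitate the corresponding step in the proof of Lemma \ref{lemma2-2}, which relies on already knowing that some star of the ambient maximal subspace maps into a top (precisely what is to be proved for $Q$). Finally, your fallback --- that $\pi_{2}$ lies in only two maximal singular subspaces because specialness forces type $\textsf{D}_{4}$ --- is circular: in the paper the $\textsf{D}_{4}$ conclusion is derived \emph{from} Lemmas \ref{lemma2-3} and \ref{lemma2-4}, so it is not available while proving Lemma \ref{lemma2-4}.

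The paper avoids all of this by exploiting the one object $U$ and $Q$ do share, the line $L=U\cap Q$, as a common \emph{vertex} of two triangles rather than looking for a regular pair (your observation that no regular pair couples a star-triangle of $U$ with one of $Q$ is correct, but it is not the only local device). One takes star-triangles $\Delta=\{L,L_{1},L_{2}\}\subset\langle U]_{1}$ and $\Delta'=\{L,L'_{1},L'_{2}\}\subset\langle Q]_{1}$ with $L_{i}\sim L'_{j}$ exactly when $i=j$. Since $U$ is special, $f(\Delta)$ is a top-triangle; each mixed triangle $\{L,L_{i},L'_{i}\}$ must then map to a star-triangle (otherwise $f(L'_{i})$ would land in the top containing $f(\Delta)$ and be adjacent to $f(L_{j})$, $j\ne i$), so $f(L'_{i})$ contains the $(k'-1)$-dimensional subspace $f(L)\cap f(L_{i})$. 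Because $f(\Delta)$ is a top-triangle these two subspaces are distinct, which forces $f(\Delta')$ to be a top-triangle and hence $Q$ to be special. This is a short, self-contained adjacency computation; no residue, no auxiliary subspace $R$, and no regular pairs are needed.
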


\begin{proof}
The intersection of $U$ and $Q$ is a line. We denote this line by $L$.
Consider the star-triangles 
$$\Delta=\{L,L_{1},L_{2}\}\subset \langle U]_{1}\;\mbox{ and }\;
\Delta'=\{L,L'_{1},L'_{2}\}\subset \langle Q]_{1}$$
such that $L_{i}$ and $L'_{j}$ are adjacent vertices of $\Gamma_{1}(\Pi)$
only in the case when $i=j$.
Then $f(\Delta)$ is a top-triangle.
We observe that 
$$f(L),f(L_{1}),f(L'_{1})$$
form a triangle. 
This is not a top-triangle (otherwise, $f(L'_{1})$ belongs to 
the top containing $f(\Delta)$ which is impossible).
Therefore, $f(L'_{1})$ contains the $(k'-1)$-dimen\-sional singular subspace 
$f(L)\cap f(L_{1})$.
Similarly, we establish that
$$f(L),f(L_{2}),f(L'_{2})$$
form a star-triangle
and $f(L'_{2})$ contains the $(k'-1)$-dimensional singular subspace 
$f(L)\cap f(L_{2})$. 
Since $f(\Delta)$ is a top-triangle, we have 
$$f(L)\cap f(L_{1})\ne f(L)\cap f(L_{2}).$$
So, $f(L'_{1})$ and $f(L'_{2})$ intersect $f(L)$ in two 
distinct $(k'-1)$-dimensional singular subspaces
which means that $f(\Delta')$ is a top-triangle.
Then $f$ sends the star containing $\Delta'$ to a subset in a top, 
i.e. $Q$ is special.
\end{proof}

Let $X$ and $Y$ be adjacent vertices of $\Gamma_{3}(\Pi)$.
Suppose that $\Pi$ is a polar space of type $\textsf{C}_{4}$.
Then the line joining $X$ and $Y$ contains a certain $Z\in {\mathcal G}_{3}(\Pi)$
distinct from $X,Y$.
We take any $T\in {\mathcal G}_{3}(\Pi)$ which does not belong to this line 
and such that $Z,T$ are adjacent vertices of $\Gamma_{3}(\Pi)$.
Then 
$$\dim (X\cap T)=\dim(Y\cap T)=1.$$
If $X$ is special then $T$ is special by Lemma \ref{lemma2-4}.
We apply Lemma \ref{lemma2-4} to $T,Y$ and establish that $Y$
is spacial. The latter is impossible by Lemma \ref{lemma2-3}.

Therefore, the existence of special maximal singular subspaces 
implies that $\Pi$ is a polar space of type $\textsf{D}_{4}$. 
Also, it follows from Lemmas \ref{lemma2-3} and \ref{lemma2-4}
that all special maximal singular subspaces 
form one of the half-spin Grassmannians ${\mathcal G}_{\delta}(\Pi)$, $\delta\in \{+,-\}$. 
Every $2$-dimensional singular subspace of $\Pi$ is contained 
in a certain element of ${\mathcal G}_{\delta}(\Pi)$.
Thus $f$ transfers every top to a subset in a star.
Let $g$ be the automorphism of $\Gamma_{1}(\Pi)$ described in Subsection 2.5.
Then the composition $fg$ is an isometric embedding of $\Gamma_{1}(\Pi)$ in $\Gamma_{k'}(\Pi')$
transferring stars to subsets of stars.

\subsection{Proof of Theorem \ref{theorem-main2} for $n\ge 5$}
Let $n\ge 5$. 
Suppose that there is a star $[S,U]_{n-3}$ in ${\mathcal G}_{n-3}(\Pi)$ 
such that $f([S,U]_{n-3})$ is contained in a top.
The singular subspace $S$ is $(n-4)$-dimensional and we take any $(n-5)$-dimensional singular 
subspace $T\subset S$.  
The rank of the polar space $\Pi_{T}$ is equal to $4$ and 
${\mathcal G}_{1}(\Pi_{T})$ coincides with $[T\rangle_{n-3}$.
The star $[S,U]_{n-3}$ is contained in $[T\rangle_{n-3}$, i.e.
it is a star of  ${\mathcal G}_{1}(\Pi_{T})$.
The restriction of $f$ to $[T\rangle_{n-3}$
is an isometric embedding of $\Gamma_{1}(\Pi_{T})$ in $\Gamma_{k'}(\Pi')$.
By Subsection 5.2, $\Pi_{T}$ is a polar space of type $\textsf{D}_{4}$
which implies that $\Pi$  is a polar space of type $\textsf{D}_{n}$.
Thus $f$ transfers stars to subsets of stars if $\Pi$  is a polar space of type $\textsf{C}_{n}$.

Now we consider the case when $k'=n'-3$ 
and show that $f$ sends every star to a subset in a star.

Suppose that ${\mathcal S}\subset {\mathcal G}_{n-3}(\Pi)$ is a star
such that $f({\mathcal S})$ is contained in a top.
We take any top 
$$\langle U]_{n-3},\;\;\;U\in {\mathcal G}_{n-2}(\Pi)$$
intersecting ${\mathcal S}$ in a line.
Since the intersection of two distinct tops contains at most one element,
$f(\langle U]_{n-3})$ cannot be in a top.
Hence it is contained in a certain star 
$$[S',U']_{n'-3},\;\;\;S'\in {\mathcal G}_{n'-4}(\Pi'),\;U'\in {\mathcal G}_{n'-1}(\Pi').$$
If $X_{1},X_{2},X_{3}\in \langle U]_{n-3}$ form a triangle 
then their images form a star-triangle and 
$$
\langle f(X_{1}), f(X_{2}), f(X_{3})\rangle =U'.
$$
The dimension of 
$$S:=X_{1}\cap X_{2}\cap X_{3}$$ is equal to $n-5$.
We choose $Y\in {\mathcal G}_{n-3}(\Pi)$ satisfying the following conditions:
\begin{enumerate}
\item[$\bullet$] $Y\cap U=S$,
\item[$\bullet$] there is a point of $Y\setminus U$ collinear to all points of $U$
(note that the singular subspace $U$ is not maximal).
\end{enumerate}
By Lemma \ref{lemma-dist}, $d(Y,X_{i})=2$ for every $i$ and we have
$d(Y,Z)=3$ for every $Z\in \langle U]_{n-3}$
which does not contain $S$.
We want to show that 
\begin{equation}\label{eq2-2}
d(f(Y), X')\le 2\;\;\;\;\;\forall\;X'\in [S',U']_{n'-3}.
\end{equation}
This contradicts the fact that $f$ is an isometric embedding of $\Gamma_{n-3}(\Pi)$ in 
$\Gamma_{n'-3}(\Pi')$ and we get the claim.

For every $i\in \{1,2,3\}$ we have
$$d(f(X_{i}),f(Y))=2.$$
It follows from Lemma \ref{lemma-dist} that 
the dimension of every $f(X_{i})\cap f(Y)$ is equal to $n'-4$ or $n'-5$.

First, we consider the case when $f(X_{i})\cap f(Y)$ is $(n'-4)$-dimensional 
for a certain $i$.
If $f(X_{i})\cap f(Y)$ coincides with $S'$
then $f(Y)$ belongs to the big star $[S'\rangle_{n'-3}$ which implies \eqref{eq2-2}.
If $f(X_{i})\cap f(Y)$ is distinct from $S'$
then 
$$\dim (f(Y)\cap S')=n'-5$$
and there is a point $$p\in (f(X_{i})\cap f(Y))\setminus S'.$$
This point belongs to $U'$.
Hence $p\in f(Y)\setminus S'$ is collinear to all points of every element of $[S',U']_{n'-3}$.
Since $f(Y)\cap S'$ is $(n'-5)$-dimensional,
we get  \eqref{eq2-2} again (Lemma \ref{lemma-dist}).

Now we suppose that $f(X_{i})\cap f(Y)$ is $(n'-5)$-dimensional for every $i$.
If this subspace is contained in $S'$ for a certain $i$
then all $f(X_{i})\cap f(Y)$ are coincident with $f(Y)\cap S'$ and 
the latter subspace is $(n'-5)$-dimensional.
Since the distance between $f(Y)$ and every $f(X_{i})$ is equal to $2$,
for every $i$ there exists a point 
$$p_{i}\in f(X_{i})\setminus f(Y)$$
collinear to all points of $f(Y)$ (Lemma \ref{lemma-dist}).
If one of the points $p_{i}$ belongs to $S'$ 
then it is contained in every element of $[S',U']_{n'-3}$.
This implies \eqref{eq2-2}, since $f(Y)\cap S'$ is $(n'-5)$-dimensional (Lemma \ref{lemma-dist}).
Suppose that $p_{i}\in f(X_{i})\setminus S'$ for every $i$
and consider the $2$-dimensional singular subspace $T$ spanned by $p_{1},p_{2},p_{3}$.
Every point of $T$ is collinear to all points of $f(Y)$ and $T\cap S'=\emptyset$. 
Each $X'\in[S',U']_{n'-3}$ has a non-empty intersection with $T$, 
i.e. there is a point of $X'\setminus S'$ collinear to all points of $f(Y)$.
As above, we get \eqref{eq2-2}.

Consider the case when every $f(X_{i})\cap f(Y)$ is not contained in $S'$.
The singular subspace $f(Y)\cap S'$ is $(n'-6)$-dimensional. 
We choose three points
$$p_{i}\in (f(X_{i})\cap f(Y))\setminus S',\;\;\;i=1,2,3.$$
The singular subspace spanned by $f(Y)\cap S'$ and $p_{1},p_{2},p_{3}$
is $(n'-3)$-dimensional. Hence it coincides with $f(Y)$.
This means that $f(Y)$ is contained in $U'$. 
Since the dimension of the intersection of two $(n'-3)$-dimensional subspaces of $U'$
is not less than $n'-5$, we have \eqref{eq2-2}.

\section{Proof of Theorem \ref{theorem-main3}}
Suppose that $n=n'$ and $f$ is an isometric embedding of $\Gamma_{n-2}(\Pi)$
in $\Gamma_{n-2}(\Pi')$.
Maximal cliques of the graphs $\Gamma_{n-2}(\Pi)$ and $\Gamma_{n-2}(\Pi')$
are tops and $f$ transfers every top to a subset contained in a top, i.e.
there exists a mapping 
$$g:{\mathcal G}_{n-1}(\Pi)\to {\mathcal G}_{n-1}(\Pi')$$ 
such that 
$$f(\langle U]_{n-2})\subset \langle g(U)]_{n-2}$$
for every $U\in {\mathcal G}_{n-1}(\Pi)$.
The mapping $g$ is injective
(distinct maximal cliques go to subsets of distinct maximal cliques).
 
Two distinct tops of ${\mathcal G}_{n-2}(\Pi)$ have a non-empty intersection 
if and only if the corresponding elements of ${\mathcal G}_{n-1}(\Pi)$
are adjacent vertices of $\Gamma_{n-1}(\Pi)$.
The same holds for tops of ${\mathcal G}_{n-2}(\Pi')$.
This implies that $g$ sends adjacent vertices of $\Gamma_{n-1}(\Pi)$
to adjacent vertices of $\Gamma_{n-1}(\Pi')$. 
Therefore,
$$d(X,Y)\ge d(g(X),g(Y))$$
for all $X,Y\in {\mathcal G}_{n-1}(\Pi)$.
We want to shows that $g$ is an isometric embedding of $\Gamma_{n-1}(\Pi)$
in $\Gamma_{n-1}(\Pi')$. 
Since every geodesic in $\Gamma_{n-1}(\Pi)$ 
can be extended to a maximal geodesic consisting of $n$ edges, 
it is sufficient to establish that
for any $X,Y\in {\mathcal G}_{n-1}(\Pi)$ satisfying $d(X,Y)=n$ 
we have 
$$d(f(X),f(Y))=n.$$

Let $X,Y\in {\mathcal G}_{n-1}(\Pi)$ and $d(X,Y)=n$.
Consider a frame $p_{1},\dots,p_{n},q_{1},\dots,q_{n}$
such that every $p_{i}$ is non-collinear to $q_{i}$ and 
$$X=\langle p_{1},\dots,p_{n}  \rangle,\;\; Y=\langle q_{1},\dots,q_{n}  \rangle.$$
Then 
$$A:=\langle p_{1},\dots,p_{n-1}  \rangle,\;\; B:=\langle p_{2},\dots,p_{n} \rangle$$
and
$$C:=\langle q_{1},\dots,q_{n-1}  \rangle,\;\; D:=\langle q_{2},\dots,q_{n} \rangle$$
are elements of ${\mathcal G}_{n-2}(\Pi)$
satisfying the following conditions:
\begin{enumerate}
\item[(1)] $X=\langle A,B\rangle$ and $Y= \langle C,D\rangle$,
\item[(2)] $d(A,C)=d(B,D)=n$ and $d(A,D)=d(B,C)=n-1$.
\end{enumerate}
We have
$$g(X)=\langle f(A),f(B)\rangle,\;\;g(Y)= \langle f(C),f(D)\rangle$$
and 
$$d(f(A),f(C))=d(f(B),f(D))=n,$$
$$d(f(A),f(D))=d(f(B),f(C))=n-1.$$
Suppose that the distance between $g(X)$ and $g(Y)$ is less than $n$.
Then $g(X)$ and $g(Y)$ have a non-empty intersection.

\begin{lemma}\label{lemma3-1}
Each point of $g(X)\cap g(Y)$ does not belong 
to $f(A)\cup f(B)\cup f(C) \cup f(D)$.
\end{lemma}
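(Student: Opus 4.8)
The plan is to argue by contradiction, combining two facts already in place. First, $g(X)=\langle f(A),f(B)\rangle$ and $g(Y)=\langle f(C),f(D)\rangle$ are singular subspaces, so any point lying in one of them is collinear to every other point of that subspace. Second, the maximal-distance pairs satisfy $d(f(A),f(C))=d(f(B),f(D))=n$; since here $k=n-2$, this distance equals $k+2$, and the final assertion of Lemma \ref{lemma-dist} guarantees that each such pair realizes only possibility (2). In particular $f(A)\cap f(C)=\emptyset=f(B)\cap f(D)$, so every point of $f(A)$ is non-collinear to some point of $f(C)$, and every point of $f(B)$ is non-collinear to some point of $f(D)$ (and symmetrically with the roles interchanged).

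Now let $r\in g(X)\cap g(Y)$ and suppose, towards a contradiction, that $r$ lies in one of the four subspaces. If $r\in f(A)$, then $r\in g(Y)$ and $f(C)\subset g(Y)$, so the singularity of $g(Y)$ forces $r$ to be collinear to every point of $f(C)$; this contradicts possibility (2) for the pair $(f(A),f(C))$, which requires the point $r\in f(A)$ to be non-collinear to some point of $f(C)$. The remaining cases are identical once each subspace is matched with its distance-$n$ partner: for $r\in f(B)$ use $f(D)\subset g(Y)$ and the pair $(f(B),f(D))$; for $r\in f(C)$ use $f(A)\subset g(X)$ and the pair $(f(C),f(A))$; for $r\in f(D)$ use $f(B)\subset g(X)$ and the pair $(f(D),f(B))$. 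In every case the containment in the singular subspace $g(X)$ or $g(Y)$ makes $r$ collinear to the whole partner subspace, while possibility (2) makes $r$ non-collinear to one of its points, a contradiction.

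I expect the only point requiring care to be the bookkeeping of the pairing: one must match $f(A)$ with $f(C)$ and $f(B)$ with $f(D)$, because the crossed pairs $(f(A),f(D))$ and $(f(B),f(C))$ sit at distance only $n-1$ and therefore need not realize possibility (2), so no contradiction could be extracted from them. Beyond a single invocation of Lemma \ref{lemma-dist} with $m=k+2$, no computation is involved.
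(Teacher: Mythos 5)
Your argument is correct and coincides with the paper's own proof: both invoke the final assertion of Lemma \ref{lemma-dist} for the distance-$n$ pairs $(f(A),f(C))$ and $(f(B),f(D))$ to get that every point of one member is non-collinear to some point of the other, and then contradict this with the collinearity forced by membership in the singular subspace $g(X)$ or $g(Y)$. Your explicit warning about using the correct pairing (and not the crossed distance-$(n-1)$ pairs) is exactly the bookkeeping the paper relies on implicitly.
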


\begin{proof}
Since the distance between $f(A)$ and $f(C)$ is equal to $n$,
Lemma \ref{lemma-dist} shows that $f(A)\cap f(C)=\emptyset$ and every point of $f(A)$
is non-collinear to a certain point of $f(C)$.
On the other hand, if $p\in g(X)\cap g(Y)$ then it is collinear to all points of $g(C)$. 
Hence $p$ does not belong to $f(A)$.
Similarly, we show that $p$ does not belong to any of $f(B),f(C),f(D)$.
\end{proof}

The distance between $f(A)$ and $f(D)$ is equal to $n-1$ and,
by Lemma \ref{lemma-dist}, we have two possibilities.
In each of these cases, there is a point $q\in f(D)$ collinear to all points of $f(A)$.
Let $p\in g(X)\cap g(Y)$. By Lemma \ref{lemma3-1}, 
$g(X)$ is spanned by $f(A)$ and $p$. 
Since $q$ is collinear to $p$ and all points of $f(A)$, 
it is collinear to all points of $g(X)$.
The latter means that $q\in g(X)$ (recall that $g(X)$ is a maximal singular subspace of $\Pi'$).
Then $q$ belongs to $g(X)\cap g(Y)$ which contradicts Lemma \ref{lemma3-1}.

Thus $g(X)\cap g(Y)=\emptyset$, i.e. the distance between 
$g(X)$ and $g(Y)$ is equal to $n$. 
So, $g$ is an isometric embedding of $\Gamma_{n-1}(\Pi)$ in $\Gamma_{n-1}(\Pi')$.
By  \cite{Pankov1}, it is induced by 
a collinearity preserving injection of $\Pi$ to $\Pi'$. 
If $S\in {\mathcal G}_{n-2}(\Pi)$ is the intersection of $X,Y\in {\mathcal G}_{n-1}(\Pi)$
then $f(S)$ coincides with $g(X)\cap g(Y)$.
This implies that $f$ is induced by the same collinearity preserving injection of $\Pi$ to $\Pi'$.

\end{document}